\newtheorem{theorem} {Theorem}
\newtheorem{lemma} {Lemma}
\newtheorem{definition} {Definition}
\newtheorem{claim} {Claim}
\newtheorem{prop} {Proposition}
\numberwithin{claim}{section}
\numberwithin{lemma}{section}
\numberwithin{theorem}{section}
\numberwithin{corollary}{section}
\numberwithin{prop}{section}
\numberwithin{proposition}{section}
\numberwithin{observation}{section}
\numberwithin{example}{section}
\numberwithin{definition}{section}
\title{A study of a combination of distance domination and resolvability in graphs}
\author{
	Dwi Agustin Retnowardani \\
		Mathematics Departement\\
		 University of Airlangga\\
		 Surabaya, Indonesia \\
		\texttt{2i.agustin@ikipjember.ac.id} \\
		\And
		Muhammad Imam Utoyo \\
		Mathematics Departement\\
		 University of Airlangga\\
		 Surabaya, Indonesia \\
		 \texttt{m.i.utoyo@fst.unair.ac.id}\\
		%% examples of more authors
	
	\And
		Dafik \\
		Mathematics Education Department\\
		University of Jember\\ 
		Jember, Indonesia\\
		\texttt{d.dafik@unej.ac.id}\\
		\And
		Liliek Susilowati \\
		Mathematics Departement\\
	 University of Airlangga\\
		 Surabaya, Indonesia \\
		 \texttt{liliek-s@fst.unair.ac.id}\\
		\And 
		Kamal Dliou \\
	National School of Applied Sciences(ENSA)\\
	Ibn Zohr University\\
     Agadir, Morocco \\
	\texttt{dlioukamal@gmail.com}

}
\begin{document}

\maketitle

\begin{abstract}
  For $k \geq 1$, in a graph $G=(V,E)$, a set of vertices $D$ is a distance $k$-dominating set of $G$, if any vertex in $V\setminus D$ is at distance at most $k$ from some vertex in $D$. The minimum cardinality of a distance $k$-dominating set of $G$ is the distance $k$-domination number, denoted by $\gamma_k(G)$. An ordered set of vertices $W=\{w_1,w_2,\ldots,w_r\}$  is a resolving set of $G$, if for any two distinct vertices $x$ and $y$ in $V\setminus W$, there exists $1\leq i\leq r$, such that $d_G(x,w_i)\neq d_G(y,w_i)$. The minimum cardinality of a resolving set of $G$ is the metric dimension of the graph $G$, denoted by $dim(G)$. In this paper, we introduce the distance $k$-resolving dominating set, which is a subset of $V$ that is both a distance $k$-dominating set and a resolving set of $G$. The minimum cardinality of a distance $k$-resolving dominating set of $G$ is called the distance $k$-resolving domination number and is denoted by $\gamma^r_k(G)$. We give several bounds for $\gamma^r_k(G)$ some in terms of the metric dimension $dim(G)$ and the distance $k$-domination number $\gamma_k(G)$. We determine $\gamma^r_k(G)$ when $G$ is a path or a cycle. Afterwards, we characterize the connected graphs of order $n$ having $\gamma^r_k(G)$ equal to $1$, $n-2$, and $n-1$, for $k\geq 2$. Then, we construct graphs realizing all the possible triples $(dim(G),\gamma_k(G),\gamma^r_k (G))$, for all $k\geq 2$. Later, we determine the maximum order of a graph $G$ having distance $k$-resolving domination number $\gamma^r_k(G)=\gamma^r_k\geq 1$, we provide graphs achieving this maximum order for any positive integers $k$ and $\gamma^r_k$. Finally, we establish Nordhaus-Gaddum bounds for $\gamma^r_k(G)$, for $k\geq 2$.
  \end{abstract}
{\bf Keywords:}{ resolving set, metric dimension, distance k-domination, distance k-resolving domination.}\\
{\bf MSC classes:}{ 05C12, 05C69.}

% In setting up this template for *Science* papers, we've used both
% the \section* command and the \paragraph* command for topical
% divisions.  Which you use will of course depend on the type of paper
% you're writing.  Review Articles tend to have displayed headings, for
% which \section* is more appropriate; Research Articles, when they have
% formal topical divisions at all, tend to signal them with bold text
% that runs into the paragraph, for which \paragraph* is the right
% choice.  Either way, use the asterisk (*) modifier, as shown, to
% suppress numbering.

\section{Introduction}

In this paper, we study finite, simple, and undirected graphs. For graph terminology, we refer to \cite {Gch}.

In 1976, Meir and Moon \cite{meir} studied a combination of two concepts distance and domination in graphs. For $k \geq 1$, we call a  \textit{distance $k$-dominating set} in a graph $G=(V, E)$, a subset of the vertex set $V$ such that for any vertex $v\in V\setminus D$, we have $d_G (v, D) = min \{d_G (v, x): x \in D \} \leq k $, where $d_G(v,x)$ is the distance in $G$ between the vertex $v$ and $x$. The minimum cardinality overall distance $k$-dominating sets of $G$, is the \textit{distance $k$-domination number} and is denoted by $\gamma_k (G)$. When $k=1$, the distance $1$-domination number is the well-known domination number of the graph denoted by $\gamma (G)$. Distance $k$-dominating sets find multiple applications in problems arousing graphs like communication networks \cite{slat}, geometric problems \cite{licht}, facility location problems \cite{henin2}. Results about this well-studied concept can be found surveyed in a recent book chapter~\cite{hening}.

Another concept associated with distance in graphs is resolvability and the metric dimension of graphs, introduced by Harary and Melter \cite{hara} and Slater \cite{slat1}. Let $W =\{w_1, w_2,..., w_r\}$ be an ordered set of vertices in a graph $G$, the \emph{metric representation} of $v$ with respect to $W$ is the $r$-vector $c(v\rvert W) = (d_G(v,w_1),d_G(v,w_2),...,d_G(v,w_r))$. The set $W$ is a \textit{resolving set} of $G$, if for every two distinct vertices $v,u\in V\setminus W$, $c(v\rvert W)\neq c(u\rvert W)$. The minimum cardinality of a resolving set of $G$ is the \textit{metric dimension} of $G$, and is denoted $dim(G)$. Due to their important role either from a theoretical or practical point of view, resolving sets and the metric dimension of graphs attracted attention these past years (see surveys \cite{bail,tilq}). Resolving sets find many applications in several areas like network verification  \cite{beer}, robot navigation \cite{khu}, pharmaceutical chemistry  \cite{char}, coin weighing problems, Mastermind game (see references in \cite{cacer1,her1}) and more.

The problems of finding $\gamma_k (G)$ and $dim(G)$ are both NP-Hard problems in general, see respectively \cite{chang} and \cite{khu}.

To join the utility of resolving sets and distance $k$-dominating sets, we study a set satisfying the two properties. 
\begin{definition}
	A distance $k$-resolving dominating set is a set $S\subseteq V$, where $S$ is both a resolving set and a distance $k$-dominating set of $G$. The distance  $k$-resolving domination number, denoted by $\gamma^r_k (G)$,  is the minimum cardinality of a distance $k$-resolving dominating set of $G$, i.e.,   $\gamma^r_k(G)=min\{\lvert S\rvert: S \text{ is a distance k-resolving dominating set of G}\}$.
\end{definition}
\par A situation where the uses of resolving sets and distance $k$-dominating sets are both needed could represent a possible application of distance $k$-resolving dominating sets. For example, if we consider 
a network of multiple navigation systems, where we need
to control or get information about the positions of the navigation
systems, with a constraint on the distance to transmitters. Then distance $k$-resolving dominating sets are required.

Resolving sets that satisfy additional properties are known and studied. For example, independent resolving set \cite{char2}, is a resolving set that is also an independent set. Connected resolving set \cite{sae}, is a resolving set that is also a connected set.   For $k=1$, the distance $1$-resolving dominating set is a resolving set that is also a dominating set, the minimum cardinality of such set was first studied under the name of resolving domination number in \cite{brig}, while it appeared as metric-location-domination number in \cite{hening3}. More studies were done about that case relating it with other graph parameters, see for example \cite{cacer,gonz,her}. Here we use the name resolving  domination number and denote by $\gamma^r (G)$.

In Section~\ref{sec2}, we give sharp bounds for $\gamma^r_k (G)$ in terms of the metric dimension, the distance $k$-domination number, the order, the diameter, the radius, and the girth of the graph. Also, we give the distance $k$-resolving domination number of the families of paths and cycles. In Section~\ref{sec3}, for all $k\geq 1$, we show that $\gamma^r_k (G)$ is equal to $1$ if and only if $G$ is a path of order at most $k+1$. For $k\geq 2$, we show an equivalence between $\gamma^r_k (G)$ and $dim(G)$, which we use to characterize all graphs of order $n$ having $\gamma^r_k (G)$ equal to $n-1$ and $n-2$. In Section~\ref{sec6}, we determine all the realizable triples of positive integers $(\beta,\gamma,\alpha)$ by a graph $G$ having $dim(G)=\beta$, $\gamma_k(G)=\gamma$, and  $\gamma^r_{k}(G)=\alpha$ when $k\geq 2$, in particular the graphs we construct realizing these values are all trees. In Section~\ref{sec4}, for all $k\geq 1$, we show that a graph $G$ having distance $k$-resolving domination number $\gamma^r_k (G)=\gamma^r_k\geq 1$, has a maximum order of $\gamma^r_k+\gamma^r_k\sum_{p=1}^{k} (2p+1)^{\gamma^r_k-1}$. Also, we construct graphs attaining this maximum order for any arbitrary positive integers $k$ and $\gamma^r_k$. Finally, Section \ref{sec5} is devoted to Nordhaus-Gaddum bounds for the distance $k$-resolving domination number of graphs for $k\geq 2$.

\section{Preliminary results and bounds for $\gamma^r_k (G)$}\label{sec2}

Every superset of a distance $k$-dominating set is a distance  $k$-dominating set. It is true also for resolving sets. This means that every superset of a distance $k$-resolving dominating set is also a distance $k$-resolving dominating set. We give the following bounds that extend bounds given for $k$ equal to $1$ and $2$, in \cite{cacer} and \cite{wardani} respectively to all $k\geq 1$.

\begin{prop}\label{prop2.1}
	Let $G$ be a connected graph. For $k\geq 1$, we have $max\{\gamma_k(G),dim(G)\} \leq \gamma^r_k (G) \leq min\{ \gamma_k(G)+dim(G), n-1\}$.
\end{prop}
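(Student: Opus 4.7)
The proposition splits cleanly into a lower bound and an upper bound, and both follow from unpacking the definition together with the superset remark the authors state immediately before the statement. I do not expect any real obstacle; the plan is essentially to verify containments.

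For the lower bound $\max\{\gamma_k(G), dim(G)\} \leq \gamma^r_k(G)$, I would pick an arbitrary distance $k$-resolving dominating set $S$. By definition, $S$ is a resolving set, so $|S|\geq dim(G)$, and $S$ is a distance $k$-dominating set, so $|S|\geq \gamma_k(G)$. Taking the minimum over all such $S$ gives $\gamma^r_k(G) \geq \max\{\gamma_k(G), dim(G)\}$.

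For the first half of the upper bound, $\gamma^r_k(G) \leq \gamma_k(G) + dim(G)$, I would let $D$ be a minimum distance $k$-dominating set and $W$ a minimum resolving set of $G$, and consider $S=D\cup W$. By the superset observation recalled by the authors just before the proposition, $S$ is simultaneously a resolving set (since it contains $W$) and a distance $k$-dominating set (since it contains $D$), hence a distance $k$-resolving dominating set. Therefore $\gamma^r_k(G)\leq |D\cup W|\leq |D|+|W|=\gamma_k(G)+dim(G)$.

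For the remaining bound $\gamma^r_k(G)\leq n-1$, assuming $n\geq 2$, I would pick any vertex $v\in V$ and set $S=V\setminus\{v\}$. Since $S$ contains at most one vertex whose metric representation could coincide with another, and in fact only $v$ lies outside $S$, the set $S$ is trivially a resolving set: there are no two distinct vertices of $V\setminus S$ to distinguish. Moreover, since $G$ is connected and $n\geq 2$, the vertex $v$ has at least one neighbor in $S$, so $d_G(v,S)=1\leq k$, showing that $S$ is a distance $k$-dominating set. Hence $S$ is a distance $k$-resolving dominating set of cardinality $n-1$, and combining this with the previous paragraph yields $\gamma^r_k(G)\leq \min\{\gamma_k(G)+dim(G),\,n-1\}$, completing the proof.
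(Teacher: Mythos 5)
Your proof is correct and follows essentially the same route as the paper: the lower bound by reading off both properties from a minimum distance $k$-resolving dominating set, the first upper bound via the union $D\cup W$, and the second via a set of cardinality $n-1$ (the resolving condition being vacuous with only one vertex left out, and connectivity giving domination). Your version is slightly more careful in writing $|D\cup W|\leq |D|+|W|$ and in justifying why $V\setminus\{v\}$ works, but there is no substantive difference.
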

\begin{proof}
	Let $S$ be a minimum distance $k$-resolving dominating set of $G$. Since $S$ is both a resolving set and a distance $k$-dominating set, then  $dim(G)\leq \lvert S\rvert$, and $\gamma_k(G)\leq \lvert S\rvert$. Thus $max\{\gamma_k(G),dim(G)\} \leq \gamma^r_k (G)$.\par
	Let $D$ and $W$ be respectively a minimum distance $k$-dominating set and a minimum resolving set of $G$. The set $S=D\cup W$ is a distance $k$-resolving dominating set of cardinality $\lvert S\rvert=\gamma_k(G)+dim(G)$. Also, any subset of $V$ of cardinality $n-1$ is both a resolving set and a distance $k$-dominating set. Then we have $\gamma^r_k (G) \leq min\{ \gamma_k(G)+dim(G), n-1\}$.
\end{proof}
The \textit{eccentricity} of a vertex $v$ in $G$ is the maximum distance between $v$ and any other vertex in $G$. The maximum and minimum eccentricity in $G$ are respectively the \textit{diameter} and the \textit{radius}  of $G$ denoted respectively  $diam(G)$ and $rad(G)$.

\begin{lemma}\label{lem2.1}
	Let $G$ be a connected graph. For $k\geq diam(G)$, $\gamma^r_k(G)=dim(G)$.
\end{lemma}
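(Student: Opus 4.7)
The plan is to prove equality by squeezing $\gamma^r_k(G)$ between $\dim(G)$ and $\dim(G)$ from Proposition~\ref{prop2.1} and a direct construction. The lower bound $\dim(G) \leq \gamma^r_k(G)$ is immediate from Proposition~\ref{prop2.1}, so all the work lies in the upper bound.

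For the upper bound, I would argue that when $k \geq \text{diam}(G)$, every resolving set is automatically a distance $k$-dominating set. Specifically, let $W$ be a minimum resolving set of $G$, so that $|W| = \dim(G)$. For any vertex $v \in V \setminus W$, I would pick any $w \in W$ (which is possible as long as $W \neq \emptyset$, i.e., $G$ is not the trivial one-vertex graph, which is a harmless edge case handled separately since both sides equal $0$). Then
\[
d_G(v, W) \leq d_G(v, w) \leq \text{diam}(G) \leq k,
\]
so $W$ distance $k$-dominates $V \setminus W$. Hence $W$ is a distance $k$-resolving dominating set, giving $\gamma^r_k(G) \leq |W| = \dim(G)$.

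Combining the two inequalities yields $\gamma^r_k(G) = \dim(G)$. There is really no main obstacle here: the argument is a one-line observation that the diameter bound makes the domination constraint vacuous relative to any nonempty resolving set. The only subtlety worth being explicit about is the degenerate case where $V \setminus W = \emptyset$, where the distance $k$-domination condition is trivially satisfied.
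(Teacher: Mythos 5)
Your proof is correct and follows the same route as the paper: the lower bound comes from Proposition~\ref{prop2.1}, and the upper bound from observing that when $k\geq diam(G)$ every non-empty set (in particular a minimum resolving set) is automatically a distance $k$-dominating set. The extra care about the one-vertex graph is harmless but unnecessary in the paper's setting.
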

\begin{proof}
	If  $k\geq diam(G)$, then any non-empty set of vertices in $V$ is a distance $k$-dominating set. Hence any resolving set is also a distance $k$-dominating set of $G$. Therefore, $\gamma^r_k(G)\leq dim(G)$. From Proposition~\ref{prop2.1} it follows that $\gamma^r_k(G)=dim(G)$.
\end{proof}
Let $P_n$ denote the path graph with  $V(P_n)=\{1,2,\ldots,n\}$ and $E(P_n)=\{i(i+1): 1\leq i \leq n-1\}$.  It is proved that $dim(P_n)=1$~\cite{char}, and for $k\geq 1$, $\gamma_k(P_n)=\lceil \frac{n}{2k+1} \rceil $~\cite{davila}. The values of the distance $k$-resolving domination number of $P_n$ for $k$ equal to $1$ and $2$ are given respectively in \cite{brig} and \cite{wardani}. In the following we give $\gamma^r_k(P_n)$ for all $k\geq 1$.
\begin{prop}\label{prop3.1}
	For $k\geq 1$ and $n\geq 2$,
	\begin{displaymath}
	\gamma^r_k(P_n)=\left\{ 
	\begin{array}{ll}
	1 , & {\rm if}\,\,  k\geq n-1,  \\
	2 , & {\rm if}\,\,   \lfloor \frac{n}{2}\rfloor \leq k \leq n-2, \\
	\lceil \frac{n}{2k+1} \rceil , & {\rm if}\,\, 1 \leq k \leq \lfloor \frac{n}{2}\rfloor-1.
	\end{array} \right.
	\end{displaymath}
\end{prop}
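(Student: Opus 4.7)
The plan is to split into the three stated ranges of $k$ and handle each separately. The case $k\geq n-1$ is immediate: since $\operatorname{diam}(P_n)=n-1\leq k$, Lemma~\ref{lem2.1} gives $\gamma^r_k(P_n)=\dim(P_n)=1$. The substance of the proof lies in the remaining two ranges, and both reduce to a single structural observation about resolving sets in paths, which I would prove first.

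The key lemma I would establish up front is that \emph{any two distinct vertices of $P_n$ form a resolving set}. For $W=\{a,b\}$ with $a\neq b$, if $u\neq v$ satisfied $|u-a|=|v-a|$ and $|u-b|=|v-b|$, then $u+v$ would have to equal both $2a$ and $2b$, a contradiction. A direct consequence is that on $P_n$, every distance $k$-dominating set of cardinality at least $2$ is automatically a resolving set.

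For the range $1\leq k\leq \lfloor n/2\rfloor-1$, I would combine this observation with Proposition~\ref{prop2.1} and the known value $\gamma_k(P_n)=\lceil n/(2k+1)\rceil$. Because $2k+1\leq n-1$ in this range, we have $\gamma_k(P_n)\geq 2$, so any minimum distance $k$-dominating set is also a resolving set, giving $\gamma^r_k(P_n)\leq \gamma_k(P_n)=\lceil n/(2k+1)\rceil$. The matching lower bound $\gamma^r_k(P_n)\geq \gamma_k(P_n)$ comes from Proposition~\ref{prop2.1}.

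For the range $\lfloor n/2\rfloor \leq k \leq n-2$, I would argue the lower bound by noting that the only singleton resolving sets of $P_n$ are the two endpoints $\{1\}$ and $\{n\}$ (any interior vertex $w$ fails to distinguish $w-1$ from $w+1$), and each of these is a distance $k$-dominating set only when $k\geq n-1$; hence $\gamma^r_k(P_n)\geq 2$ in the stated range. For the upper bound, I would exhibit $\{1,n\}$ as a witness: it resolves $P_n$ by the key observation, and the maximum value of $\min(v-1,n-v)$ over $v\in\{1,\dots,n\}$ equals $\lfloor (n-1)/2\rfloor\leq \lfloor n/2\rfloor\leq k$, so it is a distance $k$-dominating set.

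The only genuinely non-routine step is the preliminary observation that any pair of vertices resolves $P_n$; I expect this to be the main point where one has to pause, since everything else is then assembled from it together with the known values of $\dim(P_n)$ and $\gamma_k(P_n)$ and Lemma~\ref{lem2.1}. Edge cases ($n=2,3$) fall cleanly into Case~1 or Case~2 and need no separate treatment.
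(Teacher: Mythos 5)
Your proposal is correct and follows essentially the same route as the paper: Lemma~\ref{lem2.1} for $k\geq n-1$, the observation that the only singleton resolving sets are the endpoints (which fail to distance $k$-dominate when $k\leq n-2$) for the middle range, and the fact that any two vertices resolve $P_n$ together with $\gamma_k(P_n)=\lceil n/(2k+1)\rceil$ for the last range. The only difference is cosmetic: you supply an explicit proof of the two-vertex resolving claim, which the paper merely asserts, and you exhibit $\{1,n\}$ as a witness where the paper invokes the generic bound $\gamma_k(G)+\dim(G)$ from Proposition~\ref{prop2.1}.
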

\begin{proof}
	In \cite{char}, we have $dim(P_n)=1$. So by Proposition~\ref{prop2.1},  $\gamma_k(P_n)\leq \gamma^r_k(P_n) \leq \gamma_k(P_n)+1$. Also, for $1\leq i,j \leq n$, with $i\neq j$, we have $d_{P_n}(i,j)=\lvert i-j\rvert$. Then any resolving set of cardinality $1$ must be $\{1\}$ or $\{n\}$.   
	\begin{itemize}
		\item 	For $k\geq n-1$, since $diam(P_n)=n-1$, it follows from Lemma~\ref{lem2.1} that $\gamma^r_k(P_n)=dim(P_n)=1$.
		\item 	For $\lfloor \frac{n}{2}\rfloor \leq k \leq n-2$,  based on \cite{davila} $\gamma_k(P_n)=1$, then $\gamma^r_k(P_n)$ is equal to $1$ or $2$. It is clear that an end-vertex is not distance $k$-dominating. Thus, $\gamma^r_k(P_n)= \gamma_k(P_n)+1=2$.
		\item For $1 \leq k \leq \lfloor \frac{n}{2}\rfloor-1$, in \cite{davila} we have $\gamma_k(P_n)=\lceil \frac{n}{2k+1} \rceil \geq 2$. Also, any set $S$ consisting of two or more distinct vertices in $ V(P_n)$ is a resolving set of $P_n$. Thus, $\gamma^r_k(P_n)=\gamma_k(P_n)=\lceil \frac{n}{2k+1} \rceil $.\qedhere
	\end{itemize}
\end{proof}
The path $P_n$ is a graph achieving the bounds in  Proposition~\ref{prop2.1}. For $k\geq n-1$, we have $\gamma^r_k(P_n)=dim(P_n)$. For $1 \leq k \leq \lfloor \frac{n}{2}\rfloor-1$, $\gamma^r_k(P_n)=\gamma_k(P_n)$, and for $\lfloor \frac{n}{2}\rfloor \leq k \leq n-2$, $\gamma^r_k(P_n)=\gamma_k(P_n)+dim(P_n)$.
\par Let $C_n$ denote the cycle graph with $n\geq 3$, where   $V(C_n)=\{0,1,\ldots,n-1\}$ and $E(C_n)=\{i(i+1)  (\text{ mod } n): 0\leq i \leq n-1\}$. We have $dim(C_n)=2$~\cite{char1}, and for $k\geq 1$, $ \gamma_k(C_n)=\lceil \frac{n}{2k+1} \rceil $~\cite{davila}.

\begin{prop}\label{prop3.2}
	For $k\geq 1$ and $n\geq 3$,
	\begin{displaymath}
	\gamma^r_k(C_n)=\left\{ 
	\begin{array}{ll}
	2 , & {\rm if}\,\,  4k+1\geq n,  \\
	3 , & {\rm if}\,\,   4k+2=n, \\
	\lceil \frac{n}{2k+1} \rceil , & {\rm if}\,\, 4k+3\leq n.
	\end{array} \right.
	\end{displaymath}
\end{prop}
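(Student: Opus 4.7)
The plan is to combine Proposition~\ref{prop2.1}, which gives $\max\{dim(C_n),\gamma_k(C_n)\}\le\gamma^r_k(C_n)\le\gamma_k(C_n)+dim(C_n)$, with the known values $dim(C_n)=2$ and $\gamma_k(C_n)=\lceil n/(2k+1)\rceil$, and then analyse the three regimes $n\le 4k+1$, $n=4k+2$, and $n\ge 4k+3$ separately. A preliminary fact I would use throughout is the following characterisation of resolving pairs in $C_n$: a two-element set $\{u,v\}$ is a resolving set of $C_n$ if and only if $u$ and $v$ are not antipodal. Indeed, two distinct vertices of $C_n$ that are equidistant from a vertex $u$ must be of the form $u+t$ and $u-t\pmod n$, so being equidistant from both $u$ and $v$ would force $2u\equiv 2v\pmod n$, i.e.\ $u=v$ or $u,v$ are antipodal.

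For $n\le 4k+1$, both $\gamma_k(C_n)$ and $dim(C_n)$ are at most $2$, so Proposition~\ref{prop2.1} gives $\gamma^r_k(C_n)\ge 2$. A pair $\{0,d\}$ distance $k$-dominates $C_n$ iff both arcs between $0$ and $d$ have length at most $2k+1$, which amounts to $n-2k-1\le d\le 2k+1$ (together with $1\le d\le n-1$). For $n\le 4k+1$ this interval always contains at least two integers, so one can choose $d\ne n/2$, yielding a non-antipodal distance $k$-dominating pair which by the preliminary fact is also resolving. For $n=4k+2$ the same interval collapses to $\{2k+1\}=\{n/2\}$, so every distance $k$-dominating pair is antipodal and hence fails to resolve; this upgrades the lower bound to $\gamma^r_k(C_n)\ge 3$. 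For the matching upper bound I would take $S=\{0,1,2k+1\}$: the pair $\{0,1\}\subseteq S$ is non-antipodal and hence resolving, while a direct coverage check shows that the three radius-$k$ arcs around $0$, $1$, and $2k+1$ cover every vertex of $C_{4k+2}$.

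For $n\ge 4k+3$, we have $\gamma_k(C_n)\ge 3$, so Proposition~\ref{prop2.1} gives $\gamma^r_k(C_n)\ge\lceil n/(2k+1)\rceil$. For the matching upper bound I would set $q=\lceil n/(2k+1)\rceil$ and take the equispaced set $S=\{i(2k+1)\bmod n : 0\le i\le q-1\}$. Consecutive radius-$k$ arcs around points of $S$ are contiguous intervals of length $2k+1$, so their union has total length $q(2k+1)\ge n$ and covers $C_n$; in particular $|S|=q$ and $S$ is distance $k$-dominating. To show $S$ also resolves, suppose $v\ne v'$ have the same distance vector to $S$. Applying the preliminary fact to $0\in S$ and to $2k+1\in S$ gives $v+v'\equiv 0\pmod n$ and $v+v'\equiv 4k+2\pmod n$, whence $n\mid 4k+2$, contradicting $n\ge 4k+3$. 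The main obstacle I foresee is exactly this last step: one needs a distance $k$-dominating set of the minimum possible size $\lceil n/(2k+1)\rceil$ that is also resolving, and the cleanest route I see is this equispaced construction together with the modular argument, which succeeds precisely because $n\nmid 4k+2$ for $n\ge 4k+3$, whereas the trivial divisibility at $n=4k+2$ is exactly what forces an extra vertex in the middle case.
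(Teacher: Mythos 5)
Your proof is correct, and its skeleton matches the paper's: the same three regimes, the same witness set $\{0,1,2k+1\}$ at $n=4k+2$, and the same equispaced set $\{i(2k+1): 0\le i\le \lceil n/(2k+1)\rceil-1\}$ for $n\ge 4k+3$. Where you genuinely diverge is in the key resolvability lemma. The paper proves, by a fairly long case analysis using $\min\{x,y\}=\frac{x+y-|x-y|}{2}$, that the specific pair $\{0,2k+1\}$ resolves $C_n$ whenever $n\ge 2k+2$ and $n\ne 4k+2$ (its Claim 2.1), and then separately verifies that $\{0,1\}$ resolves $C_{4k+2}$. Your preliminary fact --- two distinct vertices equidistant from $u$ must sum to $2u$ modulo $n$, so a pair resolves if and only if it is not antipodal --- subsumes both of these computations in two lines, lets you treat the whole range $n\le 4k+1$ uniformly (the paper splits it at $n=2k+1$ via its Lemma 2.1), and, combined with your characterization $n-2k-1\le d\le 2k+1$ of dominating pairs, yields the $n=4k+2$ lower bound where the paper instead argues via vertex-transitivity and vertex degrees. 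The one point you should spell out is the ``only if'' direction of your preliminary fact: the argument you display shows non-antipodal $\Rightarrow$ resolving, but at $n=4k+2$ you also invoke antipodal $\Rightarrow$ not resolving. That direction is immediate (for $v=u+n/2$ the vertices $u+1$ and $u-1$ are distinct, lie outside $\{u,v\}$ for $n\ge 6$, and are equidistant from both $u$ and $v$ --- exactly the pair $1$ and $4k+1$ the paper exhibits), but it is a separate statement requiring its own one-line check. Overall your route buys a shorter and more transparent proof of the same result.
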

\begin{proof}
	We have $d_{C_n}(i,j)=min\{\lvert i-j\rvert,n-\lvert i-j\rvert\}$.
	\begin{claim}\label{claim2.1}
		For $n\geq 2k+2$ and $n\neq 4k+2$, the set of vertices $W=\{0,2k+1\}$ is a resolving set of $C_n$. 
	\end{claim}
	\begin{proof}
		Let $i,j\in V(C_n)\setminus W$, with $i\neq j$. If $d_{C_n}(i,0)\neq d_{C_n}(j,0)$, then $S$  is a resolving set. We suppose that $d_{C_n}(i,0)= d_{C_n}(j,0)$, then either $d_{C_n}(i,0)=i$ and $d_{C_n}(j,0)=n-j$ or $d_{C_n}(i,0)=n-i$ and $d_{C_n}(j,0)=j$. Without loss of generality we suppose that $d_{C_n}(i,0)=i$ and $d_{C_n}(j,0)=n-j$, which means that $i+j=n$. If $d_{C_n}(i,2k+1)= d_{C_n}(j,2k+1)$, then	  
		$min\{\lvert 2k+1-i\rvert,n-\lvert 2k+1-i\rvert\}=min\{\lvert 2k+1-j\rvert,n-\lvert 2k+1-j\rvert\}$. Since $min\{x,y\}=  \frac{x+y- \lvert x-y\rvert}{2}$, then $\lvert n-2\lvert 2k+1-i\rvert \rvert=\lvert n-2\lvert 2k+1-j\rvert \rvert$. 
		
		We suppose that $ n-2\lvert 2k+1-i\rvert =n-2\lvert 2k+1-j\rvert $, which means that $\lvert 2k+1-i\rvert =\lvert 2k+1-j\rvert $. Since  $i\neq j$, then necessarly $2k+1-i = j-2k-1$. It follows that $i+j=4k+2=n$, a contradiction since  $n\neq 4k+2$. 
		
		Otherwise if $ n-2\lvert 2k+1-i\rvert =2\lvert 2k+1-j\rvert-n$, then $n=\lvert 2k+1-i\rvert+\lvert 2k+1-j\rvert$. If $\lvert 2k+1-i\rvert= 2k+1-i$ and $\lvert 2k+1-j\rvert= 2k+1-j$, then $n= 2k+1-i+2k+1-j$. Assuming that $i+j=n$, it means that $n= 2k+1$, a contradiction.
		
		Now if $\lvert 2k+1-i\rvert=i -(2k+1)$ and $\lvert 2k+1-j\rvert=j-(2k+1)$, then $n=i+j-2(2k+1)$. Since  $i+j=n$, it means that $k=0$, a contradiction.
		
		Finally if $\lvert 2k+1-i\rvert=i -(2k+1)$ or $\lvert 2k+1-j\rvert=j-(2k+1)$, we suppose that $\lvert 2k+1-i\rvert=i -(2k+1)$ and $\lvert 2k+1-j\rvert= 2k+1-j$. Then we get that $n=i-j$, again a contradiction. 
		
		It follows that $d_{C_n}(i,2k+1)\neq  d_{C_n}(j,2k+1)$. So for $i,j \in V(C_n)\setminus W$, if $i\neq j$, then $c(i\rvert W)\neq c(j\rvert W)$.
	\end{proof}
	\begin{itemize}
		\item If $2k+1\geq n$, then $ k \geq diam(C_n)$. By Lemma~\ref{lem2.1}, $\gamma^r_k(C_n)=dim(C_n)$. Since $dim(C_n)=2$, then $\gamma^r_k(C_n)=2$.\par 
		If $4k+1\geq n\geq 2k+2$, we have $\gamma^r_k(C_n)\geq dim(C_n)=2$.  
		From Claim~\ref{claim2.1} the set $\{0,2k+1\}$ is a resolving set of $C_n$, it is also a distance $k$-dominating set of $C_n$ for $4k+1\geq n\geq 2k+2$. Therefore $\gamma^r_k(C_n)=2$.  
		\item If $4k+2=n$, based on \cite{davila} we have $\gamma_k(C_{4k+2})=2$, then by Proposition~\ref{prop2.1}, $\gamma^r_k(C_{4k+2})\geq 2$. By using contradiction we suppose that $\gamma^r_k(C_{4k+2})=2$, and let $S$ be a distance $k$-resolving dominating set of cardinality $2$. Since all the vertices have degree $2$, if a vertex $i$ is in a distance $k$-dominating set of cardinality $2$, then the set contains necessarily $i+2k+1 (\text { mod } n )$. Since the cycle $C_n$ is vertex-transitive, we suppose without loss of generality that $S=\{0,2k+1\}$. If we take the vertices $1$ and $4k+1$, then clearly $c(1\rvert S)= c(4k+1\rvert S)$. It follows that $S$ is not a resolving set of $C_{4k+2}$. Hence  $\gamma^r_k(C_{4k+2}) > 2$.

		Now, let us consider the set $S=\{0,1,2k+1\}$, we will show firt that $\{0,1\}\subset S$ is a resolving set of $C_{4k+2}$. For $i\in V(C_n)\setminus S$,  we have $c(i\rvert \{0,1\})=(min\{i,n-i\}, min\{i-1,n-i+1\})$. For $i,j \in V(C_n)\setminus S$, if $c(i\rvert \{0,1\})=c(j\rvert \{0,1\})$, it means that $min\{i,n-i\}=min\{j,n-j\}$ and $min\{i-1,n-i+1\}=min\{j-1,n-j+1\}$. Since  $min\{x,y\}=  \frac{x+y- \lvert x-y\rvert}{2}$, then $\lvert n-2i\rvert=\lvert n-2j\rvert$ and $\lvert n-2(i-1)\rvert=\lvert n-2(j-1)\rvert$. Assuming that $i\neq j$, then necessarly $n-2i=2j-n$ and $n-2(i-1)= 2(j-1)-n$, impossible. Then if $i\neq j$, we have $c(i\rvert \{0,1\})\neq c(j\rvert \{0,1\})$. Therefore $\{0,1\}$ is a resolving set of $C_{4k+2}$. 
		
		Since $\{0,2k+1\}$ is a distance $k$-dominating set of $C_{4k+2}$, it follows that $S=\{0,1,2k+1\}$ is a distance $k$-resolving dominating set of $C_{4k+2}$. Therefore $\gamma^r_k(C_{4k+2})=3$.
		\item If $4k+3\leq n$, in \cite{davila} we have $ \gamma_k(C_n)=\lceil \frac{n}{2k+1} \rceil $. Let us consider the set $S=\{i(2k+1) : 0\leq i\leq \lceil \frac{n}{2k+1} \rceil-1\}$, we have $\lvert S\rvert=\lceil \frac{n}{2k+1} \rceil$.  Claim~\ref{claim2.1} shows that the set $\{0,2k+1\}\subset S$ is a resolving set of $C_n$. Also, it is easy to see that the set $S$ is a  distance $k$-dominating set of $C_n$. It follows that $ \gamma^r_k(C_n)=\lceil \frac{n}{2k+1} \rceil$. \qedhere
	\end{itemize}	
\end{proof}

\begin{prop}\label{prop2.4}
	For $k\geq 1$, let $G$ be a connected graph, such that $rad(G)\leq k$, or $diam(G)=k+1$. Then we have  $dim(G) \leq \gamma^r_k(G)\leq dim(G)+1$.
\end{prop}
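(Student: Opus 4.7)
The plan is to dispatch the lower bound quickly and then prove the upper bound by a case split matching the hypothesis. The lower bound $dim(G)\leq \gamma^r_k(G)$ is immediate from Proposition~\ref{prop2.1}, so all the work is to show $\gamma^r_k(G)\leq dim(G)+1$ under each of the two alternative hypotheses.

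For the first case $rad(G)\leq k$, the argument is essentially bookkeeping: any vertex $v$ of eccentricity at most $k$ satisfies $d_G(v,u)\leq k$ for every $u\in V$, so $\{v\}$ is by itself a distance $k$-dominating set, i.e. $\gamma_k(G)=1$. Plugging this into the upper bound of Proposition~\ref{prop2.1} yields $\gamma^r_k(G)\leq \gamma_k(G)+dim(G)=dim(G)+1$, as desired.

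For the second case $diam(G)=k+1$, I would start from a minimum resolving set $W$ of $G$, with $\lvert W\rvert=dim(G)$, and ask how badly $W$ can fail to be distance $k$-dominating. The key observation is that any vertex $v\in V\setminus W$ not distance $k$-dominated by $W$ satisfies $d_G(v,w)\geq k+1$ for every $w\in W$; combined with $diam(G)=k+1$ this forces $d_G(v,w)=k+1$ for every $w\in W$, so such a $v$ has metric representation $c(v\rvert W)=(k+1,k+1,\ldots,k+1)$. Because $W$ is resolving, at most one vertex of $V\setminus W$ can have this representation. Hence either $W$ is already a distance $k$-resolving dominating set, giving $\gamma^r_k(G)\leq dim(G)$, or there is exactly one undominated vertex $v^*$, and $W\cup\{v^*\}$ is a distance $k$-resolving dominating set (it is still resolving as a superset of $W$, and now every vertex outside the set is within distance $k$ of $W$), so $\gamma^r_k(G)\leq dim(G)+1$.

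The only nontrivial point is the pigeonhole-style observation in the second case that undominated vertices share a common metric representation forced by $diam(G)=k+1$; everything else is a direct invocation of Proposition~\ref{prop2.1} and the definitions.
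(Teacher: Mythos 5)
Your proof is correct and follows essentially the same route as the paper: the first case reduces to $\gamma_k(G)=1$ and the upper bound of Proposition~\ref{prop2.1}, and the second case uses exactly the paper's observation that any vertex not distance $k$-dominated by a resolving set $W$ must have metric representation $(k+1,\ldots,k+1)$, of which there can be at most one. Your version merely makes explicit the final step of adjoining that single vertex to $W$, which the paper leaves implicit.
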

\begin{proof}
	Let $G$ be a connected graph with $rad(G)\leq k$. This means that $\gamma_k(G)=1$. Then by Proposition~\ref{prop2.1}, we have $dim(G) \leq \gamma^r_k(G)\leq dim(G)+1$. \par 
	If $diam(G)=k+1$, let $W\subset V$ be a resolving set of $G$. Let $v\in V\setminus dom_k(W)$, where $dom_k(W)=\{v\in V : d_G(v,W)\leq k \}$, then $v$ must be at distance greater or equal to $k+1$ from all the vertices of $W$. Since $diam(G)=k+1$,  the only possible metric representation with respect to $W$ of a vertex $v$ such that $d_G(v,W)\geq k+1$, is a vector having $k+1$ as a value in all its coordinates. Since $W$ is a resolving set, then there is at most one such vertex in $G$. Hence, $dim(G)\leq \gamma^r_k(G)\leq dim(G)+1$.
\end{proof}
For all $k\geq 1$ both bounds in Proposition~\ref{prop2.4} can be achieved. For  $\lfloor \frac{n}{2}\rfloor \leq k \leq n-2$, the graph $P_n$ has $rad(P_n)\leq k$, from Proposition~\ref{prop3.1}, $\gamma^r_k(P_n)=dim(P_n)+1$. From Lemma~\ref{lem2.1}, if $rad(G)\leq diam(G)\leq k$ then for any $G$ we have $\gamma^r_k(G)= dim(G)$. The cycle graphs $C_{2k+2}$ or $C_{2k+3}$ according to Proposition~\ref{prop3.2} are examples of  graphs with $diam(G)=k+1$ having  $\gamma^r_k(G)=dim(G)$. Also from Proposition~\ref{prop3.1}, the path $P_{k+2}$ is a graph of $diam(G)=k+1$ having $\gamma^r_k(G)=dim(G)+1$.
\begin{lemma}\label{lem3.2}\cite{hening2}
	For $k \geq1$, if $G$ is a connected graph of order $n\geq k+1$, and diameter $diam(G)\geq k$. Then there exists a minimum distance $k$-dominating set $D$ of $G$ satisfying for every vertex $v\in D$ there is a vertex $x\in V\setminus D$, such that $d_G(v,x)=k$, and $N_k(x)\cap D=\{v\}$.
\end{lemma}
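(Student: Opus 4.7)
The plan is an extremal argument over minimum distance $k$-dominating sets. For any distance $k$-dominating set $D$ and any $v \in D$, I define the private region
\[
\Pi_D(v) = \{x \in V : N_k(x) \cap D = \{v\}\}.
\]
When $D$ is minimum, $\Pi_D(v)$ is nonempty for every $v \in D$: otherwise every vertex $x$ with $v \in N_k(x)$ would be $k$-dominated by some other $u \in D\setminus\{v\}$, so that $D \setminus \{v\}$ would remain a distance $k$-dominating set, contradicting minimality of $|D|$. Moreover, any $x \in \Pi_D(v)$ with $d_G(v,x)=k$ automatically lies in $V \setminus D$ (since $x \in D \cap \Pi_D(v)$ forces $x = v$, but $d_G(v,v)=0 \neq k$). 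Hence the lemma reduces to finding a minimum $D$ for which $\max\{d_G(v,x) : x \in \Pi_D(v)\} = k$ holds for every $v \in D$.

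Among all minimum distance $k$-dominating sets, I choose one maximizing
\[
\Phi(D) = \sum_{v \in D} \max\{d_G(v,x) : x \in \Pi_D(v)\},
\]
where each summand lies in $\{0,1,\dots,k\}$. I then claim that in such a $D$ every summand equals $k$. Suppose for contradiction that some $v \in D$ satisfies $\max\{d_G(v,x): x\in\Pi_D(v)\}=m<k$, so $\Pi_D(v)$ is contained in the ball $\{x : d_G(v,x) \leq m\}$. Using $diam(G) \geq k$ together with $n\geq k+1$, I produce a vertex $y$ with $d_G(v,y)=k-m$ that lies on a geodesic extending away from the other dominators. Setting $D' = (D\setminus\{v\})\cup\{y\}$, the triangle inequality gives $d_G(y,x)\leq (k-m)+m = k$ for every $x\in\Pi_D(v)$, so $y$ $k$-dominates every vertex previously privately dominated by $v$; hence $D'$ is again a minimum distance $k$-dominating set. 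By choosing $y$ to point toward a vertex at distance $k$ from itself (available thanks to the diameter hypothesis), one also ensures that $y$ has a private neighbor in $D'$ at distance exactly $k$, yielding $\Phi(D') > \Phi(D)$ and contradicting the extremal choice of $D$.

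The main obstacle is the swap. One has to verify that the required $y$ actually exists with all the properties used: it must sit at distance $k-m$ from $v$ along a path pointing away from $D\setminus\{v\}$, and the witnessing vertex at distance $k$ from $y$ must not already be privately dominated by some other dominator (otherwise swapping in $y$ could shrink that dominator's private distance and fail to increase $\Phi$). The degenerate case $\Pi_D(v)=\{v\}$, where $v$ is itself at distance greater than $k$ from the remaining dominators and $m=0$, needs separate treatment, and one must also check that the private regions of the vertices of $D\setminus\{v\}$ in the new set $D'$ do not shrink enough to cancel the gain from $y$. The hypothesis $diam(G)\geq k$ is exactly what provides the room needed to perform this exchange, while $n\geq k+1$ guarantees the graph contains the geodesic extensions on which $y$ is placed.
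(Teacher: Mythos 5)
First, a point of reference: the paper does not prove this lemma at all --- it is imported verbatim from Henning, Oellermann and Swart \cite{hening2} --- so there is no in-paper argument to compare yours against; I can only judge the plan on its own terms. Its overall shape (an exchange argument over minimum distance $k$-dominating sets driven by a potential $\Phi$) is a sensible way to attack such a statement, and the easy parts are sound: minimality of $D$ does force each $v\in D$ to have a private vertex in the appropriate sense, and for any $y$ with $d_G(v,y)\le k-m$ the swapped set $(D\setminus\{v\})\cup\{y\}$ is again a minimum distance $k$-dominating set by the triangle inequality. The trouble is that the two steps you explicitly defer are the entire content of the proof, and neither is a routine verification.

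Concretely: (1) you need a vertex $y$ with $d_G(v,y)=k-m$ for which $d_G(y,x^\ast)=k$, where $x^\ast$ is the farthest private vertex of $v$; that is, $y$ must extend an $(x^\ast,v)$-geodesic ``behind'' $v$. Neither $n\ge k+1$ nor $diam(G)\ge k$ supplies this: both are global hypotheses, whereas you need local room at $v$, and $v$ may have small eccentricity or every vertex at distance $k-m$ from $v$ may lie on the $x^\ast$ side. (2) Even when such a $y$ exists and gains a private vertex at distance $k$, the potential need not increase: inserting $y$ can absorb the unique distance-$k$ private vertex of some other $u\in D$, whose summand then drops by up to $k$, swamping the gain of $k-m$. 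You name this risk yourself but give no mechanism (a different or lexicographic potential, a controlled order of exchanges) to exclude it, and without one the extremal argument does not close. (3) The degenerate case is handled inconsistently with your own definition: under the paper's open neighborhood $N_k(\cdot)$ one never has $v\in\Pi_D(v)$, so ``$\Pi_D(v)=\{v\}$'' cannot occur; the correct consequence of minimality is the dichotomy that either some $x\in V\setminus D$ has $N_k(x)\cap D=\{v\}$ or $d_G(v,D\setminus\{v\})>k$. Relatedly, your claim that $x\in D\cap\Pi_D(v)$ forces $x=v$ tacitly uses $x\in N_k(x)$, which is false, so you have not actually placed the witness in $V\setminus D$ as the lemma requires. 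The statement is true, but as written this is a plan with its central exchange step missing rather than a proof.
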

The following upper bound proved for $dim(G)$ in \cite{brig} is true also for $\gamma^r_k(G)$, the proofs are similar. 
\begin{prop}\label{prop2.3}
	For $k \geq1$, let $G$ be a connected graph of order $n\geq k+1$, with $diam(G)\geq k$. Then $\gamma^r_k(G)\leq n-k\gamma_k(G)$, and this upper bound is achieved for any positive integers $k$ and $\gamma_k(G)$. 
\end{prop}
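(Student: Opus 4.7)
The plan is to prove the upper bound by exhibiting an explicit distance $k$-resolving dominating set of size $n-k\gamma_k(G)$, and then to realize the bound by constructing, for every pair of positive integers $k$ and $t$, a graph $G$ with $\gamma_k(G)=t$ and $\gamma^r_k(G)=n-kt$.

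For the upper bound I would start by invoking Lemma~\ref{lem3.2} to obtain a minimum distance $k$-dominating set $D$ whose vertices admit witnesses: for each $v\in D$ there is $x_v\in V\setminus D$ with $d_G(v,x_v)=k$ and $N_k(x_v)\cap D=\{v\}$. Fixing a shortest $v$-to-$x_v$ path and letting $R_v$ be the $k$ vertices of this path other than $v$, I would then show by a short triangle-inequality argument that the $R_v$'s are pairwise disjoint and disjoint from $D$: if $u\in R_v\cap R_{v'}$ with $d_G(v,u)=a$ and $d_G(v',u)=b$, then $d_G(v',x_v)\le b+(k-a)$, while the witness condition forces $d_G(v',x_v)>k$, so $a<b$; by symmetry also $b<a$, contradiction. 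The same kind of estimate shows that no vertex of any $R_v$ can lie in $D$. Hence $S:=V\setminus\bigcup_{v\in D}R_v$ has $|S|=n-k\gamma_k(G)$, contains $D$, and is a distance $k$-dominating set.

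The main obstacle is to prove that $S$ is also a resolving set. I would take distinct $u,u'\in V\setminus S$ and treat two cases. If $u,u'\in R_v$ for the same $v$, they occupy two different positions along the chosen shortest path from $v$, and the coordinate of $c(\cdot\,|\,S)$ at $v\in S$ separates them. If instead $u\in R_v$ and $u'\in R_{v'}$ with $v\neq v'$, I would reuse the same triangle-inequality machinery to get $d_G(u',v)>b$ while $d_G(u,v)=a$, and symmetrically $d_G(u,v')>a$ while $d_G(u',v')=b$; a short case split on whether $a\le b$ or $a>b$ then shows that one of the coordinates at $v$ or $v'$ distinguishes $u$ from $u'$. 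This completes the upper bound.

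For the achievability part, given $k\ge 1$ and $t\ge 1$ I propose the caterpillar $G$ built from a spine $p_1p_2\cdots p_t$ by attaching a pendant path $p_iq_i^1q_i^2\cdots q_i^k$ at each spine vertex, so $n=(k+1)t$. To see that $\gamma_k(G)=t$, note that each leaf $q_i^k$ has only the $k+1$ vertices of its own pendant path within distance $k$, and these sets are disjoint across $i$, forcing $\gamma_k(G)\ge t$; the set $\{p_1,\dots,p_t\}$ itself is a distance $k$-dominating set of size $t$. Finally, I would verify that $\{p_1,\dots,p_t\}$ is a resolving set by computing $d_G(q_i^j,p_l)=j+|i-l|$, whose minimum value $j$ is attained uniquely at $l=i$, so distinct pairs $(i,j)$ produce distinct metric representations. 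Combined with the lower bound $\gamma^r_k(G)\ge\gamma_k(G)=t$ from Proposition~\ref{prop2.1}, this yields $\gamma^r_k(G)=t=n-k\gamma_k(G)$.
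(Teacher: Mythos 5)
Your proposal is correct and follows essentially the same route as the paper: it invokes Lemma~\ref{lem3.2} to obtain witness vertices, deletes the $k$ interior vertices of a shortest witness path for each vertex of $D$, checks that $D$ already resolves the deleted vertices by the same two-case distance comparison, and realizes the bound with the same caterpillar-type tree (the paper's $T_\gamma$). The only difference is that you spell out the pairwise disjointness of the sets $R_v$, which the paper leaves implicit.
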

\begin{proof}
	Suppose that $\gamma_k(G)=\gamma$. Based on Lemma~\ref{lem3.2}, let us consider $D=\{1,2,\ldots,\gamma\}$ a minimum distance $k$-dominating set, such that for all $1\leq i \leq \gamma$, there exists a vertex $w_{i,k}$ verifying that $d_G(i,w_{i,k})=k$, and for $j\neq i$, $d_G(j,w_{i,k})>k$. Now let $P_i=iw_{i,1}w_{i,2}\ldots w_{i,k}$ be a shortest  $(i,w_{i,k})$-path. We can see that for $1\leq p\leq k$, we have $d_G(i,w_{i,p})=p$, and $d_G(j,w_{i,p})>p$. For any two different vertices $w_{i,p}$, $w_{j,q}$, with $1\leq i,j \leq \gamma$, and $1\leq p,q \leq k$, we will check the vector of distances with respect to the set $D$, we discuss the following two cases.
	\begin{itemize}
		\item[$(i)$] If $i\neq j$, we suppose without loss of generality that $q\geq p$. We have $d_G(i,w_{i,p})=p$, and $d_G(i,w_{j,q})\geq q+1>p$.
		\item[$(ii)$] If $i=j$ and $p\neq q$, we have $d_G(i,w_{i,p})=p$, and $d_G(i,w_{i,q})=q\neq p$.
	\end{itemize}   
	It follows that the set $D$ resolves all the vertices $w_{i,p}$, where $1\leq i \leq \gamma$, and  $1\leq p\leq k$. Then the set $S=V\setminus\cup^\gamma_{i=1}\{w_{i,j}\}^k_{j=1}$ is both a distance $k$-dominating set and a resolving set. Hence $\gamma^r_k(G)\leq\lvert S\rvert= \lvert V\setminus\cup^\gamma_{i=1}\{w_{i,j}\}^k_{j=1}\rvert=n-k\gamma=  n-k\gamma_k(G)$.\par 
	The family of Trees $\{T_\gamma: \gamma\geq 1 \}$ illustrated as an example in figure~\ref{fig1} has $\gamma^r_k(T_\gamma)= n-k\gamma$, for $k,\gamma\geq 1$, where $\gamma_k(T_\gamma)=\gamma$. We have any distance $k$-dominating set in $T_\gamma$ must contain at least one vertex in each branch $iw_{i,1}\ldots w_{i,k}$, with $1\leq i\leq \gamma$. Also, the set of vertices $\{1,2,\ldots,\gamma\}$ is a distance $k$-dominating set of $G$. Then clearly $\gamma_k(T_\gamma)=\gamma$.  We can check as above that the set of vertices $\{1,2,\ldots,\gamma\}$ is a resolving set of $T_\gamma$. It follows from Proposition ~\ref{prop2.1} that it is a minimum distance $k$-resolving dominating set of $T_\gamma$ of cardinality $n-k\gamma=n-k\gamma_k(T_\gamma)$.	
	\begin{figure}[h]%
		\centering
		\includegraphics[width=0.6\textwidth]{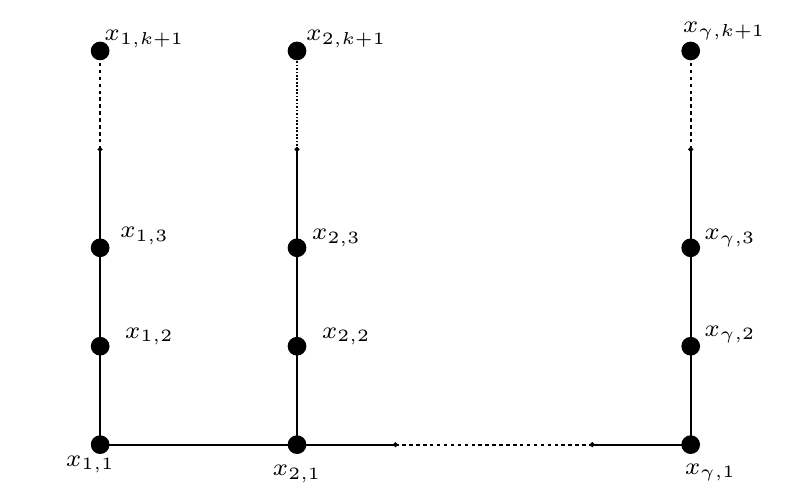}
		\caption{Tree graph $T_\gamma$ having $\gamma^r_k(T_\gamma)= n-k\gamma_k(T_\gamma)$}\label{fig1}
	\end{figure}	
\end{proof} 
For a connected graph $G$ of  order $n$ and  diameter $d$, we have $dim(G)\leq n-d$ \cite{char}. The graphs achieving equality are characterized in \cite{her1}. This type of bound involving the order and the diameter of the graph was provided for the resolving domination number in \cite{cacer}. We give a general upper bound for all $k\geq 1$.
\begin{prop}\label{th4.1}
	For $k\geq 1$, let $G$ be a connected graph of order $n$ and diameter $d$. Then
	\begin{displaymath}
	\gamma^r_k(G)\leq \left\{ 
	\begin{array}{ll}
	n-d , & {\rm if}\,\,  d \leq k,  \\
	n-d+1 , & {\rm if}\,\,  k+1 \leq d \leq 2k, \\
	n-d+\lfloor \frac{d}{2k+1} \rfloor , & {\rm if}\,\,  d \geq 2k+1.
	\end{array} \right.
	\end{displaymath} 
	These bounds are sharp. 
\end{prop}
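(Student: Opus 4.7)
The plan is to split the proof along the three diameter ranges and, in each of the last two cases, build an explicit distance $k$-resolving dominating set by starting from a diametral path $v_0 v_1\cdots v_d$ and removing a carefully chosen subset of its internal vertices. A standing observation I will use throughout is that along a diametral path the graph distance equals the path distance, i.e.\ $d_G(v_i,v_j)=|i-j|$; this is a one-line consequence of the triangle inequality applied to $d=d_G(v_0,v_d)$.

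For the range $d\leq k$, I would invoke Lemma~\ref{lem2.1}, which reduces the question to the metric dimension, and then use the bound $dim(G)\leq n-d$ from \cite{char} (recalled just before the statement of Proposition~\ref{th4.1}) to conclude $\gamma^r_k(G)\leq n-d$.

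For the range $k+1\leq d\leq 2k$, I would take $S=V(G)\setminus\{v_1,\dots,v_{d-1}\}$, so $|S|=n-d+1$. Each removed $v_j$ satisfies $\min(d_G(v_j,v_0),d_G(v_j,v_d))=\min(j,d-j)\leq \lfloor d/2\rfloor\leq k$, so $\{v_0,v_d\}\subseteq S$ already distance $k$-dominates it; and the values $d_G(v_j,v_0)=j$ for $1\leq j\leq d-1$ are pairwise distinct, so $v_0$ alone resolves the removed vertices.

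For the range $d\geq 2k+1$, set $r=\lfloor d/(2k+1)\rfloor\geq 1$. A short arithmetic check shows that the path on the $d+1$ vertices $v_0,\dots,v_d$ has distance $k$-domination number $\lceil (d+1)/(2k+1)\rceil=r+1$; I pick a set of indices $P\subseteq\{0,1,\dots,d\}$ with $|P|=r+1$ so that every $j\in\{0,\dots,d\}$ lies within $k$ of some element of $P$. Then I let $S=V(G)\setminus\{v_j:j\notin P\}$, giving $|S|=n-(d-r)=n-d+\lfloor d/(2k+1)\rfloor$. Domination is immediate: a removed $v_j$ has $d_G(v_j,v_p)=|j-p|\leq k$ for some $p\in P$. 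For the resolving step, for two removed indices $j\neq j'$ one gets $d_G(v_j,v_p)=d_G(v_{j'},v_p)$ iff $2p=j+j'$; since $|P|=r+1\geq 2$ and the equation $2p=j+j'$ has at most one solution $p$, some $p\in P$ resolves the pair.

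The delicate step is the resolving argument in the third case: it hinges on the identification of path and graph distances along a diametral path together with the lower bound $|P|\geq 2$, which is exactly what the hypothesis $d\geq 2k+1$ guarantees. Sharpness is then handled by pointing to the path $P_n$: Proposition~\ref{prop3.1} shows that $\gamma^r_k(P_n)$ attains each of the three bounds in the corresponding range of $k$ (namely $1$ when $k\geq n-1$, $2$ when $\lfloor n/2\rfloor\leq k\leq n-2$, and $\lceil n/(2k+1)\rceil=1+\lfloor (n-1)/(2k+1)\rfloor$ when $1\leq k\leq \lfloor n/2\rfloor-1$).
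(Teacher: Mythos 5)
Your proposal is correct and follows essentially the same route as the paper: in each range it deletes a diametral path except for a small distance $k$-dominating subset of its indices, uses $d_G(v_i,v_j)=|i-j|$ along the path to verify both domination and resolution of the deleted vertices, and cites $P_n$ via Proposition~\ref{prop3.1} for sharpness. The only (harmless) cosmetic differences are that you retain $\{v_0,v_d\}$ rather than the paper's $\{v_k,v_d\}$ in the middle case, and in the third case you work with an abstract minimum dominating index set and spell out the resolving step via $2p=j+j'$, where the paper uses the explicit progression $k, k+(2k+1),\dots$ and calls the resolving step straightforward.
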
 
\begin{proof}
	Let $P=(0,1,\ldots,d)$ be a diametral path in $G$, i.e., $P$ is a shortest path of lenght $d$. For any two vertices $i$ and $j$ in $P$, we have $d_G(i,j)=\lvert i-j\rvert$.
	\par If $d \leq k$, then by Lemma~\ref{lem2.1}, $\gamma^r_k(G)=dim(G)$. Based on \cite{char}, we have $\gamma^r_k(G)\leq n-d$.
	\par If $k+1 \leq d \leq 2k$, we consider the set of vertices $\{k,d \}$.  For $0 \leq l,m \leq d-1$, with $l\neq m$, we have $d_G(l,d)=\lvert l-d\rvert\neq \lvert m-d\rvert=d_G(m,d)$. Also, for any  $0 \leq l\leq d$,  we have $d_G(l,k)=\lvert l-k\rvert\leq k$. This means that the set  $\{k,d \}$ is resolving and distance $k$-dominating of the vertices $i \notin \{k,d\}$. Now, let $S'=V \setminus \{i: i \notin \{k,d\} \}$, then  $S'$ is a distance $k$-resolving dominating set of $G$. Hence, $\gamma^r_k(G)\leq \lvert S'\rvert=n-d+1$.
	\par If $ d \geq 2k+1$, let us consider the set of vertices $S= \{k,k+(2k+1),\dots,k+j(2k+1),\dots, min\{k+ \lfloor \frac{d}{2k+1} \rfloor (2k+1)  ,d\} \}$.  Let $l$ be a vertex in $P\setminus S$. If $min\{k+\lfloor \frac{d}{2k+1} \rfloor (2k+1)  ,d\}=k+ \lfloor \frac{d}{2k+1} \rfloor (2k+1) $, then either $k+ \lfloor \frac{d}{2k+1} \rfloor (2k+1)  < l \leq d $, or there exists $1\leq i \leq \lfloor \frac{d}{2k+1} \rfloor $ such that $k+(i-1)(2k+1)< l < k+i(2k+1)$, or $0 \leq l < k$. In all those cases there exists a vertex in $S$ at distance less or equal to $k$ from $l$. The same can be observed when $min\{k+\lfloor \frac{d}{2k+1} \rfloor (2k+1)  ,d\}=d$. Furthermore, since $\lvert S\rvert\geq 2$ and for  $0 \leq i,j \leq d$, $d_G(i,j)=\lvert i-j\rvert$, it is straightforward that $S$ resolves the vertices in $P\setminus S$. 
	\par If we consider the set $S'=V\setminus\{ P\setminus S\}$, then $S'$ is a distance $k$-resolving dominating set of the graph $G$. Hence, $\gamma^r_k(G)\leq\lvert S'\rvert= n-d+\lfloor \frac{d}{2k+1} \rfloor$. 
	\par The graph path $P_n$ has diameter $n-1$. From Proposition~\ref{prop3.1} it is a graph achieving the upper bound $n-d$ for $n\leq k+1$. It achieves the upper bound $n-d+1$, when $k+2 \leq n \leq 2k+1$. The path graph $P_n$ also achieves the upper bound $n-d+\lfloor \frac{d}{2k+1} \rfloor$, when $n\geq 2k+2$. 
\end{proof}
If $k=1$, for a connected graph of diameter $d\geq 3$, the upper bound in Proposition~\ref{th4.1} is precisely the bound given in terms of the order and the diameter in \cite{cacer}.

The girth of the graph is the length of a shortest cycle in the graph. The following lower bounds proved in \cite{davila} for  $\gamma_k(G)$ holds also for $\gamma^r_k(G)$ and they are achieved.

\begin{prop}\label{th2.3}
	For $k\geq 1$, let $G$ be a connected graph having diameter $d$, radius $r$, and girth $g$. Then we have
	\begin{itemize}
		\item[$(1)$] $\gamma^r_k(G)\geq \frac{d+1}{2k+1}$;
		\item[$(2)$] $\gamma^r_k(G)\geq \frac{2r}{2k+1}$;
		\item[$(3)$] $\gamma^r_k(G)\geq \frac{g}{2k+1}$, if $g<\infty$.
	\end{itemize}
	These bounds are sharp.
\end{prop}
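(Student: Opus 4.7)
The plan is to derive the three inequalities as immediate consequences of Proposition~\ref{prop2.1}, then verify sharpness by invoking the already-established values of $\gamma^r_k$ on paths and cycles. Since Proposition~\ref{prop2.1} gives $\gamma_k(G) \leq \gamma^r_k(G)$ for every connected graph $G$, any lower bound on $\gamma_k(G)$ transfers verbatim to $\gamma^r_k(G)$. The paper already cites from \cite{davila} that $\gamma_k(G) \geq \frac{d+1}{2k+1}$, $\gamma_k(G) \geq \frac{2r}{2k+1}$, and (when $g<\infty$) $\gamma_k(G) \geq \frac{g}{2k+1}$. Chaining each of these with Proposition~\ref{prop2.1} yields items $(1)$, $(2)$, and $(3)$ in one line each.

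For sharpness I would exploit the closed-form values computed in Propositions~\ref{prop3.1} and \ref{prop3.2}. Taking $P_n$ with $n=(2k+1)m$ and $m\geq 2$ (so that $1\leq k \leq \lfloor n/2\rfloor-1$ and Proposition~\ref{prop3.1} gives $\gamma^r_k(P_n)=m$), the diameter is $d=n-1$, hence $\frac{d+1}{2k+1}=m=\gamma^r_k(P_n)$, establishing sharpness of $(1)$. For $(3)$, take $C_n$ with $n=(2k+1)m$ and $m\geq 3$ (so $n\geq 4k+3$ and Proposition~\ref{prop3.2} gives $\gamma^r_k(C_n)=m$); the girth is $g=n$, so $\frac{g}{2k+1}=m=\gamma^r_k(C_n)$. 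For $(2)$, take the same cycle $C_n$ with $n=(2k+1)m$ but with $m$ even and $m\geq 4$, so that $n$ is even and $r=n/2$; then $\frac{2r}{2k+1}=m=\gamma^r_k(C_n)$.

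There is essentially no significant obstacle: the inequalities are a one-line consequence of the monotonicity already packaged in Proposition~\ref{prop2.1}, and the sharpness examples are just specializations of Propositions~\ref{prop3.1} and \ref{prop3.2}. The only care needed is to select parameter ranges for $n$ (and the parity of $m$ in case $(2)$) so that the computed values of $\gamma^r_k$ for the chosen path or cycle fall in the regime $\lceil n/(2k+1)\rceil$ and cleanly match the geometric invariant $d$, $r$, or $g$.
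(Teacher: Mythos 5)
Your proposal is correct and follows essentially the same route as the paper: both derive $(1)$--$(3)$ by chaining $\gamma^r_k(G)\geq\gamma_k(G)$ from Proposition~\ref{prop2.1} with the lower bounds on $\gamma_k(G)$ from \cite{davila}, and both certify sharpness via the closed forms in Propositions~\ref{prop3.1} and~\ref{prop3.2}. The only (immaterial) difference is that for $(2)$ the paper uses the path $P_{2p(2k+1)}$ while you use an even cycle $C_{(2k+1)m}$ with $m\geq 4$ even; your parity check and the exclusion of $n=4k+2$ are handled correctly.
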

\begin{proof}
	In \cite{davila}, it is shown that if $G$ is a connected graph of diameter $d$, then $\gamma_k(G)\geq \frac{d+1}{2k+1}$. In the same paper we have if $G$ has radius $r$, then $\gamma_k(G)\geq \frac{2r}{2k+1}$. Also in \cite{davila}, for a connected graph of girth $g<\infty$, we have $\gamma_k(G)\geq \frac{g}{2k+1}$. Since $\gamma^r_k(G) \geq \gamma_k(G)$, the above lower bounds for  $\gamma_k(G)$ are true also for $\gamma^r_k(G)$.\par 
	Some graphs in Proposition~\ref{prop3.1} and~\ref{prop3.2} are examples of graphs attaining these bounds. In $(1)$ consider the path graph of order $n=p(2k+1)$ for $p\geq 2$, since $d=n-1$, we get that $\gamma^r_k(G)= \frac{d+1}{2k+1}$. In $(2)$ consider the path graph of order $n=2p(2k+1)$. We have $r=p(2k+1)$, then from proposition~\ref{prop3.1},  $\gamma^r_k(G)= \frac{2r}{2k+1}$. In $(3)$ take a cycle graph of order $n=p(2k+1)$ for $p\geq 3$, since  $g=n$, then this is a graph having $\gamma^r_k(G)=\frac{g}{2k+1}$.  
\end{proof}

\section{ Graphs with $\gamma^r_k(G)$ equal to $1,n-2,\text{ and } n-1$.}\label{sec3}

Further, let $K_n$ denote \textit{the complete graph} on $n$ vertices, and let $K_{s,t}$ with $s,t\geq 1$ denote \textit{the complete bipartite graph}. For two graphs $G_1$ and $G_2$ the \textit{disjoint union} of $G_1$ and $G_2$, denoted by $G_1\cup G_2$, is the graph with vertex set $V(G_1\cup G_2)=V(G_1)\cup V(G_2)$ and edge set $E(G_1\cup G_2)=E(G_1)\cup E(G_2)$. The  \textit{join graph} of $G_1$ and $G_2$, denoted by $G_1 + G_2$, is the graph obtained from $G_1\cup G_2$  by joining each vertex from $V(G_1)$ to each vertex in $V(G_2)$. We denote by $\overline{G}$ the \emph{complement graph} of $G$. 
\begin{theorem}\label{theo1.1} \cite{char}
	For a connected graph $G$ of order $n\geq2$, we have
	\begin{itemize}
		\item $dim(G)=1$ if and only if $G\cong P_n$.
		\item If $n\geq 4$, then $dim(G)=n-2$ if and only if $G\in \{ K_{s,t}(s,t\geq 1),K_s+\overline{K}_{t}(s\geq 1, t\geq 2),K_s+(K_1\cup K_{t})(s,t\geq 1)  \}$.
		\item $dim(G)=n-1$ if and only if $G\cong K_n$.
	\end{itemize}
\end{theorem}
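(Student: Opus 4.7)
I would prove the three characterizations separately, in order of increasing difficulty, using as a unifying tool the notion of a \emph{twin pair}: two distinct vertices $u,v\in V(G)$ are twins if $d_G(u,w)=d_G(v,w)$ for every $w\in V(G)\setminus\{u,v\}$. The key observation is that a set $W\subseteq V$ is resolving if and only if no twin pair lies entirely outside $W$; equivalently, $W$ must contain at least $|C|-1$ vertices from each twin equivalence class $C$. Two standard structural facts I would use throughout: twins are at distance $1$ or $2$, with true twins (adjacent, $N[u]=N[v]$) forming cliques and false twins (non-adjacent, $N(u)=N(v)$) forming independent sets.

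For $dim(G)=1 \iff G\cong P_n$: if $G=P_n$, an endpoint $w$ realizes the distances $0,1,\dots,n-1$ once each, so $\{w\}$ resolves. Conversely, if $\{w\}$ resolves, then $v\mapsto d_G(v,w)$ injects $V$ into $\{0,1,\dots,ecc(w)\}$, forcing a unique vertex $v_i$ at each distance $i$ from $w=v_0$; since $v_i$ must have a neighbor at distance $i-1$ (necessarily $v_{i-1}$), and any other edge would destroy injectivity of the distance function, we conclude $G\cong P_n$.

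For $dim(G)=n-1 \iff G\cong K_n$: one direction is immediate since in $K_n$ any two non-reference vertices share the all-ones distance vector. Conversely, if $dim(G)=n-1$, then every pair of vertices is a twin pair; applying the twin condition on each pair among three distinct vertices $u,v,w$ yields $d(u,v)=d(u,w)=d(v,w)$, so all pairwise distances in $G$ are equal, and by connectedness this common value is $1$, giving $G\cong K_n$.

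For $dim(G)=n-2$, the backward direction is a direct verification for each of the three listed families by exhibiting its twin partition: $K_{s,t}$ has two false-twin classes of sizes $s$ and $t$; $K_s+\overline{K}_t$ has a true-twin class of size $s$ and a false-twin class of size $t$; and $K_s+(K_1\cup K_t)$ has a true-twin class of size $s$, a true-twin class of size $t$, and a singleton — in each case the deficiency $n-dim(G)$ works out to exactly $2$. For the forward direction, my plan is to first show that $diam(G)\geq 3$ implies $dim(G)\leq n-3$ (by using the injective distance profile to the far endpoint of a diametral path to save at least three vertices), thereby reducing to $diam(G)\leq 2$; then, given the catalog of twin classes and the counting relation $n-dim(G)=2$, argue that only a few twin-class configurations are possible and each one reconstructs to exactly one of the three listed graph families. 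The main obstacle is precisely this last step: after reducing to $diam(G)\leq 2$ and identifying the twin classes, a careful case analysis on the number and type (true versus false) of non-trivial twin classes, and on how any singleton class must attach to the non-singleton ones, is required to rule out any diameter-$2$ graph other than $K_{s,t}$, $K_s+\overline{K}_t$, or $K_s+(K_1\cup K_t)$.
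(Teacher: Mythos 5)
The paper does not actually prove this statement: Theorem~\ref{theo1.1} is imported verbatim from \cite{char} with a citation and no argument, so there is no in-paper proof to compare against. Judged on its own, your treatment of the first and third items is correct and complete. For $dim(G)=1$ the injectivity of $v\mapsto d_G(v,w)$, combined with the fact that adjacent vertices have distances to $w$ differing by at most one (which is the precise reason no ``other edge'' can exist, rather than injectivity being ``destroyed''), forces $G\cong P_n$; for $dim(G)=n-1$ the observation that $dim(G)=n-1$ forces every pair to be twins, hence all pairwise distances equal, hence equal to $1$, is sound.

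The second item is where the genuine gaps are. First, your backward verification for $K_s+(K_1\cup K_t)$ is wrong as stated: its twin partition has \emph{three} classes, of sizes $s$, $t$ and $1$, so $\sum_C(|C|-1)=n-3$ and the twin-class lower bound gives only $dim\geq n-3$, not $n-2$. To reach $n-2$ you must add that the unique vertex distinguishing the $K_s$-class from the $K_t$-class is the $K_1$ vertex itself, so the only candidate $(n-3)$-subset (which must omit one vertex from each of the three classes, in particular the $K_1$ vertex) leaves a vertex of $K_s$ and a vertex of $K_t$ with identical representations. Second, the forward direction of this item is a plan, not a proof. The reduction to $diam(G)\leq 2$ via $dim(G)\leq n-diam(G)$ is fine, but the remaining classification of diameter-$2$ graphs with $dim(G)=n-2$ is the entire substance of this part of the theorem and you do not carry it out. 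Worse, the organizing principle you propose cannot carry it: the relation $dim(G)\geq n-c$ (with $c$ the number of twin classes) yields only $c\geq 2$ and in no way bounds $c$ from above, so it does not exclude diameter-$2$ graphs with many singleton twin classes; as the $K_s+(K_1\cup K_t)$ example already shows, $n-dim(G)=2$ is not equivalent to ``two twin classes.'' A correct argument must work directly from the hypothesis that every $(n-3)$-subset of $V$ fails to resolve some pair, and derive the adjacency structure from that; until that case analysis is done, the characterization for $dim(G)=n-2$ is unproved.
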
 
In a connected graph $G$ of order $n\geq k+1$, any subset of $V$ of order greater or equal to $n-k$ is a distance $k$-dominating set. 
\begin{lemma}\label{lem2.2}
	Let $k\geq 2$, for $1\leq i\leq k$, if $G$ is a connected graph of order $n\geq i+2$, that is not a path graph, then  $\gamma^r_k(G)=n-i$ if and only if $dim(G)=n-i$.
\end{lemma}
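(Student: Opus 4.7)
My approach rests on the elementary observation that in a connected graph, any vertex set $S$ with $|V\setminus S|\leq k$ is automatically a distance $k$-dominating set, because a shortest path from any outsider to $S$ uses at most $|V\setminus S|$ edges. This single fact handles the direction $(\Leftarrow)$ immediately: a minimum resolving set $W$ of size $n-i$ satisfies $|V\setminus W|=i\leq k$, hence is distance $k$-dominating, so $\gamma^r_k(G)\leq n-i$; combined with the lower bound $\gamma^r_k(G)\geq dim(G)=n-i$ from Proposition~\ref{prop2.1}, equality follows.

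For $(\Rightarrow)$ I would argue by contradiction. From $\gamma^r_k(G)=n-i$ and Proposition~\ref{prop2.1} we have $dim(G)\leq n-i$; suppose $dim(G)\leq n-i-1$. Extending a minimum resolving set produces a resolving set $W'$ of exact size $n-i-1$. If $i\leq k-1$, then $|V\setminus W'|=i+1\leq k$, so $W'$ is already distance $k$-dominating by the central observation, contradicting $\gamma^r_k(G)=n-i$.

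The real obstacle is the subcase $i=k$, where $|V\setminus W'|=k+1$ and $W'$ may genuinely fail to be distance $k$-dominating. If it does, some $v\in V\setminus W'$ has $d(v,W')\geq k+1$, and since the shortest $v$-$W'$ path can traverse at most $k+1$ outsiders, the structure is fully forced: $V\setminus W'=\{v_0=v,v_1,\ldots,v_k\}$ induces exactly $P_{k+1}$ (any chord would shortcut the distance to $W'$), only $v_k$ has neighbors in $W'$, and $d(v_j,W')=k+1-j$ for every $j$. I would fix a neighbor $w\in W'$ of $v_k$.

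The key construction is then the swap $W^*=(W'\setminus\{w\})\cup\{v_0\}$, of size $n-k-1$. The set $W^*$ is resolving because $v_0\in W^*$ has pairwise distinct distances $1,2,\ldots,k,k+1$ to the vertices $v_1,\ldots,v_k,w$ of $V\setminus W^*$. For the distance $k$-dominating property, each $v_j$ is handled by $d(v_j,v_0)=j\leq k$; for $w$, either $w$ has a neighbor in $W'\setminus\{w\}$, yielding $d(w,W^*)\leq 1$, or $N(w)=\{v_k\}$, in which case the hypothesis that $G$ is not a path forces $v_k$ to have another neighbor $w'\in W'\setminus\{w\}$ (otherwise the component of $w$ would be exactly the path $v_0v_1\cdots v_kw$, and by connectedness $G$ itself would be a path), giving $d(w,W^*)\leq 2\leq k$ thanks to the hypothesis $k\geq 2$. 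Thus $W^*$ is a distance $k$-resolving dominating set of size $n-k-1$, contradicting $\gamma^r_k(G)=n-k$ and completing the proof.
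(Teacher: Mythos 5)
Your proof is correct and follows essentially the same route as the paper's: the backward direction and the subcase $i\leq k-1$ rest on the same observation that any set missing at most $k$ vertices is distance $k$-dominating, and the forward direction for $i=k$ uses the same forced-path structure on $V\setminus W'$ followed by a one-vertex swap. The only (local) difference is the choice of swapped vertex: you insert the far endpoint $v_0$, which makes resolvability immediate (distances $1,2,\ldots,k+1$ are pairwise distinct) but requires the connectivity/non-path hypothesis to $k$-dominate $w$, whereas the paper inserts the penultimate vertex of the forced path, which makes domination immediate but requires a short connectivity argument to separate the two vertices at distance $1$ from it.
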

\begin{proof}
	For all $ 1\leq i\leq k$, if $dim(G)=n-i$. Any subset of $V$ of cardinality $n-i\geq n-k$ is a distance $k$-dominating set. Then a resolving set of cardinality $dim(G)=n-i$ is also a distance $k$-dominating set. Therefore $\gamma^r_k(G)=dim(G)=n-i$. \par
	Conversely, if $\gamma^r_k(G)= n-i$, by Proposition~\ref{prop2.1}, we have  $dim(G)\leq n-i$. If $n=i+2$, then $\gamma^r_k(G)= n-i=2$. It follows that $dim(G)$ is equal to $1$ or $2$. Based on Theorem~\ref{theo1.1} the only graphs with $dim(G)=1$ are path graphs, it follows that $dim(G)=2$.
	\par If $n\geq i+3$, we suppose that $dim(G)< n-i$. If $i\leq k-1$, then a resolving set of cardinality $n-(i+1)\geq n-k$ is also a distance $k$-dominating set. Thus $\gamma^r_k(G)\leq n-(i+1)$, impossible.  
	Now if $i=k$, let $W\subseteq V$ be a resolving set of cardinality $n-(k+1)$, and let us denote $1,2,\ldots,{k+1}$ the vertices in $V\setminus W$. Assuming that $\gamma^r_k(G)= n-k$, then there is at least one vertex $v$ in $V\setminus W$ such that $d_G(v,W)=k+1$. Let $w\in W$ be such that $d_G(v,w)=d_G(v,W)= k+1$, and let $Q$ be a shortest $(v,w)$-path. Since $d_G(v,w)=d_G(v,W)$, and $G$ is a connected graph, the only vertex in $W\cap Q$ is $w$. We have $\lvert Q\rvert=k+2$ and $\lvert W\rvert=n-(k+1)$, which means that the subgraph induced by the vertices $1,2,\ldots,{k+1}$ and $w$ is the path $Q$. Without loss of generality, we suppose that the path $Q$ is $(k+1)k\ldots 1w$. Now, let $S= (W\setminus\{w\})\cup \{k\}$. We have $d_G(k,{k+1})=d_G(k,{k-1})=1$, $d_G(k,w)=k\geq 2$, and if $k\geq 3$,  for $1 \leq j \leq k-2$, we have $d_G(k,j)=k-j \geq 2$. Also $d_G({k+1},S\setminus\{k\})\geq k+1$, since $G$ is a connected graph and $n\geq k+3$, then there exists a vertex $u\in S\setminus\{k\}$ such that either or both $1$ and $w$ are adjacent to $u$. This means that $d_G({k-1},u)\leq k$. It follows that $S$ is a resolving set of $G$. Since $d_G(k,i)\leq k$, for $1 \leq i \leq k+1$, $i\neq k$, and $d_G(k,w)=k$,  it means that the set $S$ is also a distance $k$-dominating set of $G$. Hence $\gamma^r_k(G)\leq \lvert S\rvert= n-(k+1)$, a contradiction. Therefore $dim(G)=n-k$.
\end{proof}

By combining Theorem~\ref{theo1.1} and Lemma~\ref{lem2.2} with Proposition~\ref{prop3.1}, we give the following characterizations.
\begin{theorem}\label{th3.1}
	For any graph $G$ of order $n\geq2$, the following statements hold.
	\begin{itemize}
		\item[$(a)$] For all $k \geq 1$, $\gamma^r_k(G)=1$ if and only if $G\in\{P_i\}^{k+1}_{i=2}$.
		\item[$(b)$] If $G$ is a connected graph of order $n\geq 4$,  $\gamma^r_2(G)=n-2$ if and only if $G\in \{P_4, K_{s,t}(s,t\geq 1), K_s+\overline{K}_{t}(s\geq 1, t\geq 2), K_s+(K_1\cup K_{t})(s,t\geq 1)\}$.
		For all $k\geq 3$, $\gamma^r_k(G)=n-2$ if and only if $G\in \{ K_{s,t}(s,t\geq 1), K_s+\overline{K}_{t}(s\geq 1, t\geq 2), K_s+(K_1\cup K_{t})(s,t\geq 1)\}$.
		\item[$(c)$] If $G$ is a connected graph, for any $k\geq 2$,  $\gamma^r_k(G)=n-1$ if and only if $G\cong K_n$.
	\end{itemize}  
\end{theorem}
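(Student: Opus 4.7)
The plan is to split along the three parts (a), (b), (c), and in each to reduce the problem to Theorem~\ref{theo1.1} (a $\dim$-based characterization) via either Proposition~\ref{prop2.1} or Lemma~\ref{lem2.2}, and then to patch in the path cases separately using Proposition~\ref{prop3.1}. This works because Lemma~\ref{lem2.2} already turns the equality $\gamma^r_k(G)=n-i$ into the equality $\dim(G)=n-i$ for non-path graphs whenever $1\leq i\leq k$, so Theorem~\ref{theo1.1} does almost all the work; paths have to be handled by hand since they are excluded from Lemma~\ref{lem2.2}.

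For part (a), I would start from $\gamma^r_k(G)=1$, observe via Proposition~\ref{prop2.1} that $\dim(G)\leq 1$, and since $n\geq 2$ conclude $\dim(G)=1$. Theorem~\ref{theo1.1} then forces $G\cong P_n$, and Proposition~\ref{prop3.1} tells us $\gamma^r_k(P_n)=1$ exactly when $k\geq n-1$, i.e., $n\leq k+1$. The converse is immediate from the same proposition, giving $G\in\{P_2,\ldots,P_{k+1}\}$.

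For part (b), if $G$ is not a path then Lemma~\ref{lem2.2} with $i=2$ applies (the hypotheses $k\geq 2$ and $n\geq i+2=4$ hold), so $\gamma^r_k(G)=n-2$ is equivalent to $\dim(G)=n-2$, and Theorem~\ref{theo1.1} identifies this with the family $\{K_{s,t},\,K_s+\overline{K}_t,\,K_s+(K_1\cup K_t)\}$. The remaining work is to scan Proposition~\ref{prop3.1} for paths satisfying $\gamma^r_k(P_n)=n-2$: the branch $\gamma^r_k(P_n)=1$ would need $n=3$ and is excluded by $n\geq 4$; the branch $\gamma^r_k(P_n)=2$ forces $n=4$, which is compatible with the range $\lfloor n/2\rfloor\leq k\leq n-2$ only at $k=2$; and the branch $\lceil n/(2k+1)\rceil=n-2$ is eliminated by a quick inequality ($n-2\leq n/(2k+1)\leq n/3$ forces $n\leq 3$, contradicting the range $n\geq 2k+2$). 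Thus $P_4$ is added to the list exactly when $k=2$, explaining the split.

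For part (c), the case $n=2$ is immediate since the only connected graph is $K_2$, for which $\gamma^r_k(K_2)=1=n-1$. For $n\geq 3$ and $G$ not a path, Lemma~\ref{lem2.2} with $i=1$ yields $\gamma^r_k(G)=n-1 \Longleftrightarrow \dim(G)=n-1$, and Theorem~\ref{theo1.1} gives $G\cong K_n$. The closing check is that no path $P_n$ with $n\geq 3$ and $k\geq 2$ can satisfy $\gamma^r_k(P_n)=n-1$: each of $1=n-1$, $2=n-1$, and $\lceil n/(2k+1)\rceil=n-1$ fails in the respective range of Proposition~\ref{prop3.1} (the last because $\lceil n/5\rceil\geq n-1$ forces $n\leq 2$). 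The only genuine routine obstacle across the whole argument is the bookkeeping of sub-case ranges for the path analyses in (b) and (c); everything else is a direct reduction to Lemma~\ref{lem2.2} and Theorem~\ref{theo1.1}.
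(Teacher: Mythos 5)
Your proposal is correct and follows essentially the same route as the paper: reduce each part to the metric-dimension characterization (Theorem~\ref{theo1.1}) via Proposition~\ref{prop2.1} for $(a)$ and Lemma~\ref{lem2.2} for $(b)$ and $(c)$, then handle the excluded path graphs separately through Proposition~\ref{prop3.1}. Your explicit case-by-case verification that $\gamma^r_k(P_n)=n-2$ occurs only for $(k,n)=(2,4)$ and that $\gamma^r_k(P_n)=n-1$ never occurs for $k\geq 2$, $n\geq 3$ is merely a more detailed rendering of what the paper asserts directly.
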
 
\begin{proof}
	\leavevmode
	\begin{itemize}
		\item[$(a)$] For $k\geq 1$, if $\gamma^r_k(G)=1$, then $G$ is a connected graph and from Proposition~\ref{prop2.1}, $dim(G)=1$. The equivalence is completed by Theorem~\ref{theo1.1} and Proposition~\ref{prop3.1}. \par 
		\item[$(b)$] If $G$ is a connected graph of order $n\geq 4$ different from a path graph, then  by Lemma~\ref{lem2.2} we have $\gamma^r_k(G)=n-2$ if and only if $dim(G)=n-2$. Which means by Theorem~\ref{theo1.1} that it is equivalent to $G\in \{K_{s,t}(s,t\geq 1),K_s+\overline{K}_{t}(s\geq 1, t\geq 2),K_s+(K_1\cup K_{t})(s,t\geq 1) \}$. From Proposition~\ref{prop3.1}, we have $\gamma^r_k(P_n)=n-2$, it occurs only when $k=2$ and $n=4$. Then $\gamma^r_2(G)=n-2$ if and only if $G\in \{P_4, K_{s,t}(s,t\geq 1), K_s+\overline{K}_{t}(s\geq 1, t\geq 2), K_s+(K_1\cup K_{t})(s,t\geq 1)\}$. Also, for
		$k\geq 3$, $\gamma^r_k(G)=n-2$ if and only if $G\in \{ K_{s,t}(s,t\geq 1), K_s+\overline{K}_{t}(s\geq 1, t\geq 2), K_s+(K_1\cup K_{t})(s,t\geq 1)\}$.\par 
		\item[$(c)$ ] The only connected graphs of order $2$ and $3$ are respectively $K_2$ and $P_3$ or $K_3$. For $k\geq 2$, from Proposition~\ref{prop3.1} and Theorem~\ref{theo1.1}, we have $\gamma^r_k(K_2)=1$, $\gamma^r_k(P_3)=1$, and $\gamma^r_k(K_3)=2$. If $G$ has order $n\geq 4$, then by Lemma~\ref{lem2.2} and Theorem~\ref{theo1.1}, we have $\gamma^r_k(G)=n-1$ if and only if $G\cong K_n$. \qedhere
	\end{itemize} 
\end{proof}
For $k=1$, we have $\gamma^r(G)=n-1$ if and only if $G\in \{ K_{1,n-1}, K_n \}$ \cite{brig,hening3}. The graphs having $\gamma^r(G)$ equal to $2$ and $n-2$ are fully determined in \cite{cacer} and \cite{hening3} respectively.

\section{Realizable values for $dim(G)$, $\gamma_k(G)$, and $\gamma^r_k(G)$.}\label{sec6}

In Proposition~\ref{prop2.1}, we have $max\{\gamma_k(G),dim(G)\} \leq \gamma^r_k (G) \leq \gamma_k(G)+dim(G)$. For $k=1$, in  \cite{cacer} it is shown that for any three positive integers $\beta$, $\gamma$, and $\alpha$, verifiying that $max\{\gamma,\beta\}\leq \alpha \leq \gamma+\beta$, and  $(\beta,\gamma,\alpha)\notin \{(1,\gamma,\gamma+1) : \gamma\geq 2 \}$. There is always a graph $G$ having $dim(G)=\beta$, $\gamma(G)=\gamma$, and $\gamma^r(G)=\alpha$. We give a similar result for $dim(G)$, $\gamma_k(G)$, and $\gamma^r_k(G)$, for all $k\geq 2$. 
\par The graph families we provide in Theorem~\ref{th5.2} are all trees. To determine $\gamma^r_k(G)$ of some of these graphs, we will need the next formula for the metric dimension of trees that appeared in  \cite{char,hara,slat1}. We will recall some terminology given in \cite{char}. In a tree $T$ 
for $v\in V$,  if the degree $deg(v)\geq 3$, then $v$ is called a \emph{major vertex}. A leaf $l$ i.e. a vertex of degree one, in $T$ is a \emph{terminal vertex} of a major vertex $v$, if $v$ is the closest major vertex in terms of distance to $l$, i.e. for $u$ a major vertex in $T$ different from $v$, we have $d_T(v,l)<d_T(u,l)$.  If $v$ is a major vertex having at least one terminal vertex, then $v$ is called an \textit{exterior major vertex}. Let $L(T)$ and $EX(T)$ denote respectively the number of leaves and the number of exterior major vertices in a tree $T$.
\begin{theorem}\label{th5.1} \cite{char,hara,slat1}
	If $T$ is a tree that is not a path graph, then $dim(T)=L(T)-EX(T)$. Also, any resolving set of $T$ must contain at least one vertex from each branch at an exterior major vertex containing its terminal vertices with at most one exception.
\end{theorem}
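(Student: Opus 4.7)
The plan is to establish the formula $dim(T) = L(T) - EX(T)$ by proving matching lower and upper bounds, and simultaneously extract the structural statement about branches at exterior major vertices.

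For the lower bound, the key observation is that if $l_1$ and $l_2$ are two distinct terminal vertices of the same exterior major vertex $v$, then for every vertex $w$ outside the two $v$-to-$l_i$ paths, any shortest $(w, l_i)$-path passes through $v$, so $d_T(w, u) = d_T(w, v) + d_T(v, u)$ for each $u$ lying on either branch. Consequently, any two internal vertices $u_1, u_2$ on the respective branches with $d_T(v, u_1) = d_T(v, u_2)$ have identical coordinates with respect to any $w$ outside these branches. Hence for each exterior major vertex $v$, at most one of the branches running from $v$ to a terminal leaf of $v$ can avoid containing a vertex of the resolving set (minus $v$ itself). This forces any resolving set $W$ to satisfy $|W| \geq \sum_{v \in EX(T)} (t(v) - 1) = L(T) - EX(T)$, where $t(v)$ is the number of terminal vertices of $v$; this also yields the second, structural conclusion of the theorem.

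For the upper bound, the plan is to construct an explicit resolving set $W$ of size $L(T) - EX(T)$ by selecting, at each exterior major vertex $v$, all but one of the terminal leaves of $v$. One then verifies that for any two distinct vertices $x, y \in V(T) \setminus W$, there exists $w \in W$ with $d_T(x, w) \neq d_T(y, w)$. The verification splits into cases by the position of $x$ and $y$: both on branches of a common exterior major vertex (resolved by a chosen leaf on a third branch of the same vertex, or by the unique leaf in $W$ of a different branch), both interior to the ``skeleton'' of $T$ obtained after deleting the open branches, or one of each. In each case the identity $d_T(w, u) = d_T(w, v) + d_T(v, u)$ for chosen leaves $w$ in branches of an exterior major vertex $v$ reduces distinguishing $x$ from $y$ to distinguishing their projections onto the skeleton, which is handled by choosing leaves from suitably placed branches.

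The main obstacle is the upper bound: the construction must simultaneously resolve (i) pairs of internal vertices on the same branch of an exterior major vertex where no chosen leaf lies, (ii) pairs spanning distinct exterior major vertices, and (iii) pairs inside the skeleton. Case (i) is actually automatic because two internal vertices of the same branch have different distances to every chosen leaf outside that branch (their distances to $v$ differ). Cases (ii) and (iii) require invoking the tree structure: since $T$ is not a path, there is at least one major vertex, and after projecting onto the skeleton the chosen terminal leaves distinguish all relative positions because any two vertices of $T$ differ in their distance to some exterior major vertex or in their distance along one of its selected branches. Once these cases are dispatched, equality $dim(T) = L(T) - EX(T)$ follows, together with the ``at most one exception'' statement from the lower-bound argument.
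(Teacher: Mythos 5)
The paper does not prove this statement: Theorem 5.1 is quoted as a known result from \cite{char,hara,slat1}, so there is no in-paper argument to compare yours against; your plan follows the standard route of those references (lower bound via symmetric pairs on legs, upper bound via choosing all but one terminal leaf per exterior major vertex). Your lower bound is essentially complete and correctly delivers the ``at most one exception'' clause: if two legs at the same exterior major vertex $v$ both avoid $W$, the two neighbours of $v$ on those legs get identical representations. Two steps you use silently should be recorded: legs at distinct exterior major vertices are pairwise disjoint, and every leaf of a non-path tree is a terminal vertex of exactly one exterior major vertex, which is what makes $\sum_v\bigl(t(v)-1\bigr)$ equal $L(T)-EX(T)$.

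The genuine gap is in the upper bound, which is the substantive half of the theorem. The sentence doing all the work --- ``any two vertices of $T$ differ in their distance to some exterior major vertex or in their distance along one of its selected branches'' --- is not literally true (in a spider, all neighbours of the centre are equidistant from the unique exterior major vertex; what separates them is the chosen leaves, not the major vertex) and, more importantly, it restates what must be proved rather than proving it. A clean way to close it: suppose $x\neq y$, $x,y\notin W$, and $d_T(x,w)=d_T(y,w)$ for all $w\in W$. Then $d_T(x,y)$ is even and every $w\in W$ projects onto the midpoint $m$ of the $x$--$y$ path, i.e.\ $W$ avoids the two components $B_1\ni x$ and $B_2\ni y$ of $T-m$ that meet the path. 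If some $B_i$ contains a major vertex, take one farthest from $m$; all branches at it away from $m$ are legs inside $B_i$, there are at least two of them, so at least one chosen terminal leaf lies in $B_i$, a contradiction. If neither $B_1$ nor $B_2$ contains a major vertex, then either $m$ is a major vertex with two terminal leaves in $B_1\cup B_2$ (at most one of which is omitted, again a contradiction), or $\deg(m)=2$ and $T=B_1\cup\{m\}\cup B_2$ is a path, contradicting the hypothesis. Without an argument of this kind --- or the explicit case analysis of \cite{char} that it replaces --- your cases (ii) and (iii) are dispatched by assertion. You should also note the degenerate exterior major vertices with a single terminal vertex, which contribute no chosen leaf, and verify that $W\neq\emptyset$, i.e.\ that a non-path tree has an exterior major vertex with at least two terminal vertices.
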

\begin{theorem}\label{th5.2}
	For any three positive integers $\beta$, $\gamma$, and $\alpha$, such that $max\{\gamma,\beta\}\leq \alpha \leq \gamma+\beta$, and $(\beta,\gamma,\alpha)\notin \{(1,\gamma,\gamma+1) : \gamma\geq 2 \}$. For all $k\geq 2$, there always exists a tree graph $T$ having $dim(T)=\beta$, $\gamma_k(T)=\gamma$, and $\gamma^r_k(T)=\alpha$. There is no graph realizing the triples $\{(1,\gamma,\gamma+1) : \gamma\geq 2 \}$.
\end{theorem}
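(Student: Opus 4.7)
The plan breaks into an impossibility part and a constructive part. For the impossibility, suppose $dim(G)=1$: by Theorem~\ref{theo1.1} this forces $G\cong P_n$, and Proposition~\ref{prop3.1} then tells us that whenever $\gamma_k(P_n)\geq 2$ we have $\gamma^r_k(P_n)=\gamma_k(P_n)$, so no graph realizes $(1,\gamma,\gamma+1)$ for $\gamma\geq 2$.

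For realizability of every other valid triple $(\beta,\gamma,\alpha)$, I would first dispatch the remaining $\beta=1$ triples $(1,1,1)$, $(1,1,2)$, and $(1,\gamma,\gamma)$ with $\gamma\geq 2$: each is realized by a path of suitable length via Proposition~\ref{prop3.1}. The main work is the case $\beta\geq 2$, where I would construct a tree $T=T(\beta,\gamma,\alpha)$ as follows. Take a backbone path long enough to carry most of the distance $k$-domination load, and attach two kinds of pendants. First, \emph{long arms} of length exactly $k$, each of which forces a nearby vertex into every distance $k$-dominating set by Lemma~\ref{lem3.2}. Second, \emph{short branches} (leaves or paths of length strictly less than $k$) attached at chosen positions of the backbone, used to inflate $L(T)-EX(T)$ to the value $\beta$ so that $dim(T)=\beta$ by Theorem~\ref{th5.1}. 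Setting $s=\gamma+\beta-\alpha\in[0,\min\{\gamma,\beta\}]$ (the desired overlap between minimum resolving and distance $k$-dominating sets), I would attach exactly $s$ of the long arms at the same exterior major vertices that carry short branches, so that a single vertex can be simultaneously forced into every resolving set (by the terminal-branch condition of Theorem~\ref{th5.1}) and used to distance $k$-dominate the tip of its arm; the remaining $\gamma-s$ long arms and $\beta-s$ short branches are spaced far enough apart on the backbone to prevent any further overlap. The extreme case $\alpha=\gamma+\beta$ is essentially covered by the tree family $T_\gamma$ of Proposition~\ref{prop2.3}, which provides a template to start from.

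The \emph{main obstacle} will be establishing the exact lower bound $\gamma^r_k(T)\geq\alpha$ (the matching upper bound is handled by exhibiting an explicit distance $k$-resolving dominating set of size $\alpha$). For the lower bound one argues that any distance $k$-resolving dominating set of $T$ must contain at least $\beta$ vertices from the terminal branches of exterior major vertices (Theorem~\ref{th5.1}), at least $\gamma$ vertices to distance $k$-dominate all arm tips and the far end of the backbone (Lemma~\ref{lem3.2}), and that the spatial separation built into the construction allows at most $s$ of these demands to be met by common vertices. Making this double counting airtight in the intermediate regime $0<s<\min\{\gamma,\beta\}$, where one must rule out alternative dominator choices that could borrow vertices from inside the arms, is the technical heart of the argument.
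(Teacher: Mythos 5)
Your impossibility argument and your handling of the remaining $\beta=1$ triples are correct and coincide with the paper's (Theorem~\ref{theo1.1} plus Proposition~\ref{prop3.1}), and your overall strategy for $\beta\geq 2$ --- explicit trees, Theorem~\ref{th5.1} for the dimension, pendant paths of length $k$ to force dominators, and an overlap parameter $s=\gamma+\beta-\alpha$ --- is the same strategy the paper follows, except that the paper splits into five concrete families $T^1,\dots,T^5$ (one for each order relation among $\beta,\gamma,\alpha$, plus stars and spiders for $\gamma=1$) rather than one parametric backbone construction.

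However, as written your construction has a concrete flaw and a deferred core. First, a single ``short branch'' hung at an internal backbone vertex does \emph{not} inflate $L(T)-EX(T)$: such a vertex becomes an exterior major vertex with exactly one terminal vertex, contributing $1-1=0$ to the dimension in Theorem~\ref{th5.1}. The paper's gadgets avoid this by always placing \emph{two} sibling terminal branches at each exterior major vertex meant to raise the dimension (e.g.\ the pairs $w_{i,j},w'_{i,j}$ in $T^1$ and the legs of $T^2$); with two branches the contribution is $2-1=1$, and the forced resolving vertex in one of them is exactly the extra vertex that pushes $\alpha$ above $\max\{\beta,\gamma\}$. Your $(\beta-s)$ ``dimension-only'' gadgets therefore need to be pairs, not single branches. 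Second, a ``backbone long enough to carry most of the domination load'' works against you: every internal backbone vertex must itself be within distance $k$ of the chosen set, so spacing gadgets ``far enough apart'' silently increases $\gamma_k(T)$ beyond $\gamma$; the paper sidesteps this by using star-like trees with a single central vertex (or a short central path) that is dominated for free. Finally, the exact lower bound $\gamma^r_k(T)\geq\alpha$ --- which you correctly identify as the technical heart --- is precisely the content of the five verification claims in the paper (arguing that the tip $w'_{i,k}$ of the second branch is at distance $>k$ from everything outside $\{w_i\}\cup\{w'_{i,j}\}_j$, so the resolving vertex and the dominating vertex for that gadget cannot coincide); without carrying out that argument for your unified tree, the proposal is a plan rather than a proof.
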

\begin{proof}
	Let $\beta,\gamma,\alpha \geq 1$, such that $max\{\gamma,\beta\}\leq \alpha \leq \gamma+\beta$. We discuss the possible values for the triple $(dim(G),\gamma_k(G),\gamma^r_k(G))=(\beta,\gamma,\alpha)$, according to the following cases.
	\begin{itemize}
		\item If $\beta=1$, then $\gamma\leq \alpha \leq \gamma +1$. Also by Theorem~\ref{theo1.1} we have the path graphs are the only graphs having the metric dimension equal to $1$. For $k\geq 2$,  in a path graph any subset of vertices of order greater or equal to $2$ is a resolving set. Then if $\gamma\geq 2$, we have $\alpha=\gamma$. This means that the triple $(1,\gamma,\gamma+1)$ is not realizable by any graph for $\gamma\geq 2$. Also, according to Proposition~\ref{prop3.1} the path graphs realizes the following cases: $(i)$ if $ k+1 \geq n$, we have $\gamma=\beta=\alpha=1$. $(ii)$ If $\lfloor \frac{n}{2}\rfloor \leq k \leq n-2$, then $\gamma=\beta=1$ and $\alpha=2=\gamma+\beta$. $(iii)$ If $1 \leq k \leq \lfloor \frac{n}{2}\rfloor-1$, then $\beta=1< \gamma=\alpha=\lceil \frac{n}{2k+1} \rceil \geq 2$.
		\item If $\gamma=1$, for any $\beta \geq 2$, we have $\beta\leq \alpha \leq \beta +1$. The star graph $K_{1,\beta+1}$ has $\gamma_k(K_{1,\beta+1})=1$, and from Theorem~\ref{theo1.1} and Theorem~\ref{th3.1} we have  $\gamma^r_k(K_{1,\beta+1})=dim(K_{1,\beta+1})=\beta$, for any $k\geq 2$. This means that for $k\geq 2$, the triple $(\beta,1,\beta)$ is realized for all $\beta \geq 2$. For the case of the triple $(\beta,1,\beta+1)$, we consider the spyder tree graph, denoted by $S_{\beta+1,k}$, having one vertex $v_0$ of degree $\beta+1$ with $\beta+1$ leaves $l_i$,  $1\leq i \leq \beta+1$, at distance $k$ from $v_0$. Note that all the vertices of $S_{\beta+1,k}$ are of degree less or equal to $2$ except $v_0$. Clearly $\gamma_k(S_{\beta+1,k})=1$, and based on Theorem~\ref{th5.1}, we have $dim(S_{\beta+1,k})=\beta$. Also any resolving set must contain at least one vertex in all but one of the $(v_0,l_i)$-paths, where $1\leq i \leq \beta+1$. By using contradiction, we suppose that $\gamma^r_k(S_{\beta+1,k})=\beta$. From Theorem~\ref*{th5.1}, we consider that a minimum distance $k$-resolving dominating set $W$ of $S_{\beta+1,k}$ having cardinality $\beta$ contains one vertex in any of the $(v_0,l_i)$-paths, with $1\leq i \leq \beta$. We have the vertex $l_{\beta+1}$ is at distance greater than $k$ from the vertices in $W$. This means that $W$ is not a distance $k$-dominating set, a contradiction. Hence, $\gamma^r_k(S_{\beta+1,k})=\beta+1$.  
		\item If $\beta \geq 2$, and $\gamma \geq 2$, with $max(\gamma,\beta)\leq \alpha \leq \gamma+\beta$. The realizable values for the triple $(\beta,\gamma,\alpha)$ are considered depending on the following five subcases.
		\item[$(i)$] If $2\leq \beta=\gamma < \alpha$, the trees $T^1=\{T^1_{k,m,l}:\, m\geq 0,\, l\geq 1,\, k\geq 2\}$ in  Figure~\ref{fig2} illustrates graphs realizing this case.
		\begin{claim}
			We have $\gamma_k(T^1_{k,m,l})=dim(T^1_{k,m,l})=m+l$, and $\gamma^r_k(T^1_{k,m,l})=m+2l$.
		\end{claim}
		\begin{proof}
			Suppose that $\gamma_k(T^1_{k,m,l})=\gamma$, $dim(T^1_{k,m,l})=\beta$, and $\gamma^r_k(T^1_{k,m,l})=\alpha$. It is clear that $\{v_i \}^{m}_{i=1} \cup \{w_i \}^{l}_{i=1}$ is a minimum distance $k$-dominating set. Then $\gamma=m+l$. Based on Theorem~\ref{th5.1}, we have $\beta=m+l$, and for each $1\leq i \leq l$, a resolving set must contain one vertex from the set of vertices $\{v_{i,j} \}^{k}_{j=0}$. Also, for each  $1\leq i \leq m$, a resolving set must contain one vertex from the set of vertices $\{w_{i,j},w'_{i,j} \}^{k}_{j=0}$. Now, let $S$ be a minimum distance $k$-resolving dominating set of cardinality $\alpha$. We suppose without loss of generality, that $S$ contain a vertex from each $\{v_{i,j} \}^{k}_{j=0}$ with $1\leq i \leq m$, and one vertex from each $\{w_{i,j} \}^{k}_{j=0}$ with $1\leq i \leq l$. Since $d_G(w_i,w'_{i,k})=k$, and for $x\notin \{ w_i,w'_{i,j}\}$ we have $d_G(x,w'_{i,k})>k$. Then to be a distance $k$-dominating set $S$ must contain for each $1\leq i \leq l$, at least $w_i$ or a vertex in $\{w'_{i,j} \}^{k}_{j=0}$. Hence $\alpha \geq m+2l$. It is easy to check that the set of vertices $\{v_{i,1} \}^{m}_{i=1} \cup \{w_{i,k} \}^{l}_{i=1}\cup \{w_{i} \}^{l}_{i=1}$ is a distance $k$-resolving dominating set. Thus $\alpha \leq m+2l$. It follows that $\alpha = m+2l$.			
			\begin{figure}[h]%
				\centering
				\includegraphics[width=1\textwidth]{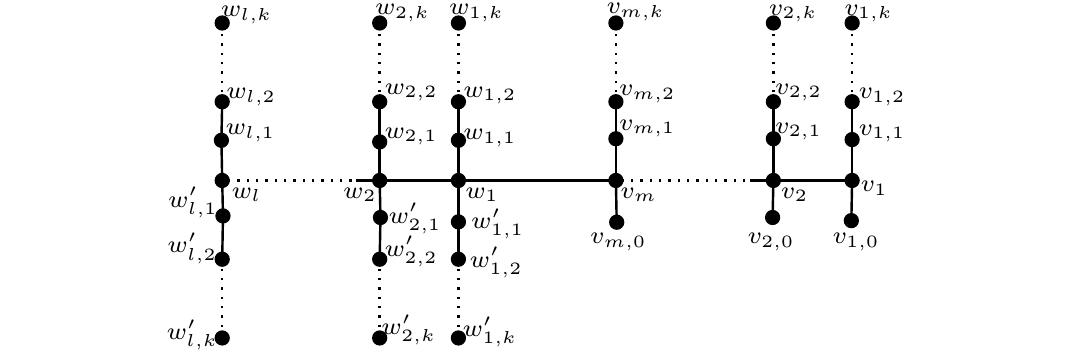}
				\caption{Tree $T^1_{k,m,l}$}\label{fig2}
			\end{figure}		
		\end{proof}
		\item[$(ii)$] If $2\leq \gamma \leq \beta= \alpha$, the family of trees  $T^2=\{T^2_{k,m,l}:\, m\geq 1,\, l\geq 1,\, k\geq 2\}$ represented in Figure~\ref{fig3} gives graphs realizing this case.
		\begin{claim}
			We have $\gamma_k(T^2_{k,m,l})=m+1$, and $\gamma^r_k(T^2_{k,m,l})=dim(T^2_{k,m,l})=m+l$.
		\end{claim}
		\begin{proof}
			The set of vertices $\{v_i \}^{m}_{i=1} \cup \{w \}$ is a minimum distance $k$-dominating set of cardinality $m+1$. Hence $\gamma_k(T^2_{k,m,l})=m+1$. Based on Theorem~\ref{th5.1}, $dim(T^2_{k,m,l})=m+l$. It is straightforward to check that the set of vertices $\{v_{i,1} \}^{m}_{i=1} \cup \{w_{i,1} \}^{l}_{i=1}$ is a distance $k$-resolving dominating set of cardinality $m+l$. Therefore $\gamma^r_k(T^2_{k,m,l})=dim(T^2_{k,m,l})=m+l$.			
		\end{proof}
		\begin{figure}[h]%
			\centering
			\includegraphics[width=0.9\textwidth]{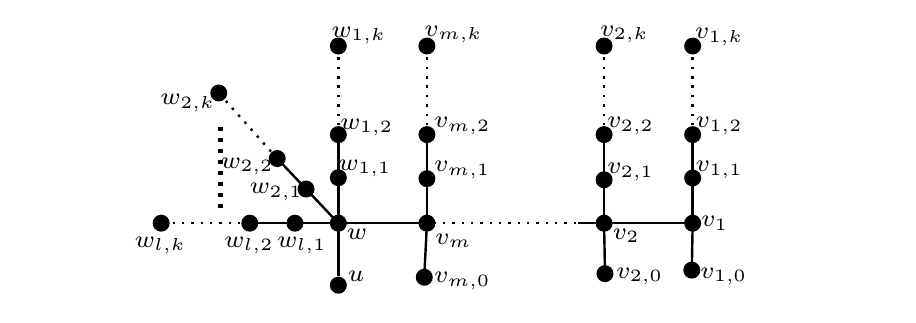}
			\caption{Tree $T^2_{k,m,l}$}\label{fig3}
		\end{figure}
		
		\item[$(iii)$] If $2\leq \beta < \gamma = \alpha$, the family of trees $T^3=\{T^3_{k,m,l}:\, m\geq 1,\, l\geq 1,\, k\geq 2\}$ represented in Figure~\ref{fig4} realizes this case.
		\begin{claim}
			We have $dim(T^3_{k,m,l})=m+1$, and $\gamma^r_k(T^3_{k,m,l})=\gamma_k(T^3_{k,m,l})=m+l+1$.
		\end{claim}
		\begin{proof}
			From Theorem~\ref{th5.1}, we have $dim(T^3_{k,m,l})=m+1$. Also  $\gamma_k(T^3_{k,m,l})=m+l+1$ ($\{v_{i} \}^{m}_{i=1} \cup \{w_{i} \}^{l}_{i=1} \cup \{u \}$ is a minimum distance $k$-dominating set of $T^3_{k,m,l}$). It is easy to check that $\{v_{i,1} \}^{m}_{i=1} \cup \{w_{i} \}^{l}_{i=1} \cup \{u_1 \}$ is a distance $k$-resolving dominating set of cardinality $m+l+1$. It follows that $\gamma^r_k(T^3_{k,m,l})=\gamma_k(T^3_{k,m,l})=m+l+1$.			
		\end{proof}	
		
		\begin{figure}[h]%
			\centering
			\includegraphics[width=0.9\textwidth]{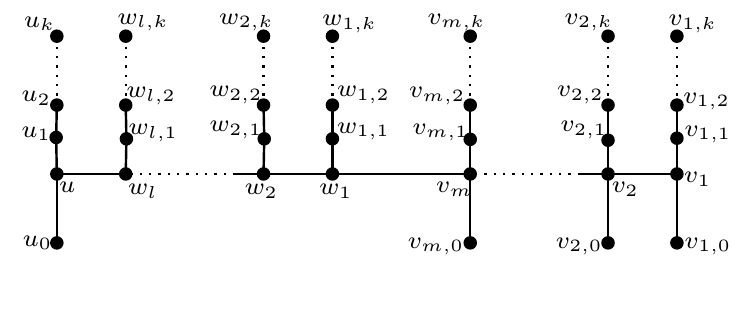}
			\caption{Trees $T^3_{k,m,l}$}\label{fig4}
		\end{figure}
		
		\item[$(iv)$] If $2\leq \gamma < \beta < \alpha$, the family of trees $T^4=\{T^4_{k,m,l,r}:\, m\geq 0,\, l\geq 0,\, r\geq 3,\, k\geq 2\}$ represented in  Figure~\ref{fig5} illustrates graphs realizing this case, where $(m,l)\neq (0,0)$.
		\begin{claim}
			We have  $dim(T^4_{k,m,l,r})=m+l+r-1$, $\gamma_k(T^4_{k,m,l,r})=m+l+1$, and $\gamma^r_k(T^4_{k,m,l,r})=m+2l+r$.
		\end{claim}
		\begin{proof}
			By using similar arguments as for the above claims, we can show that the set of vertices $\{v_{i,k} \}^{m}_{i=1} \cup \{w_{i,k} \}^{l}_{i=1}  \cup \{u_{i,k} \}^{r-1}_{i=1}$ is a minimum resolving set of cardinality $m+l+r-1$.
			The set of vertices $\{v_i \}^{m}_{i=1} \cup \{w_i \}^{l}_{i=1} \cup \{u \}$ is a minimum distance $k$-dominating set of cardinality $m+l+1$. The set of vertices $\{v_{i,1} \}^{m}_{i=1} \cup \{w_{i,k},w_i \}^{l}_{i=1}  \cup \{u_{i,k} \}^{r-1}_{i=1} \cup \{u \}$ is a minimum distance $k$-resolving dominating set of cardinality $m+2l+r$.			
		\end{proof}
		
		\begin{figure}[h]%
			\centering
			\includegraphics[width=0.9\textwidth]{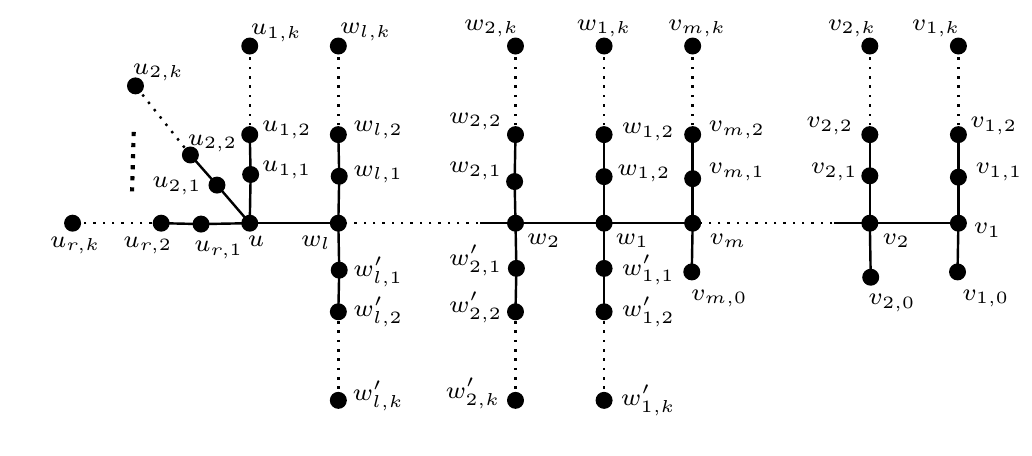}
			\caption{Tree $T^4_{k,m,l,r}$}\label{fig5}
		\end{figure}
		
		\item[$(v)$] If $2\leq \beta < \gamma < \alpha$, Figure~\ref{fig6} illustrates a family of trees $T^5=\{T^5_{k,m,l,r}:\, m\geq 0,\, l\geq 0,\, r\geq 2,\, k\geq 2\}$ realizing this case, where $(m,l)\neq (0,0)$.
		\begin{claim}
			We have $dim(T^5_{k,m,l,r})=m+l+1$, $\gamma_k(T^5_{k,m,l,r})=m+l+r$,  and $\gamma^r_k(T^5_{k,m,l,r})=m+2l+r+1$.
		\end{claim}
		\begin{proof}
			The set of vertices $\{v_{i,k} \}^{m}_{i=1} \cup \{w_{i,k} \}^{l}_{i=1}  \cup \{u_{r,k} \}$ is a minimum resolving set of cardinality $m+l+1$. The set of vertices $\{v_i \}^{m}_{i=1} \cup \{w_i \}^{l}_{i=1} \cup \{u_i \}^{r}_{i=1}$ is a minimum distance $k$-dominating set of cardinality $m+l+r$. The set of vertices $\{v_{i,1} \}^{m}_{i=1} \cup \{w_{i,k} \}^{l}_{i=1} \cup \{w_i \}^{l}_{i=1}  \cup \{u_{i} \}^{r}_{i=1} \cup \{u_{r,k} \}$ is a minimum distance $k$-resolving dominating set of cardinality $m+2l+r+1$.			
		\end{proof}	
		\begin{figure}[h]%
			\centering
			\includegraphics[width=0.9\textwidth]{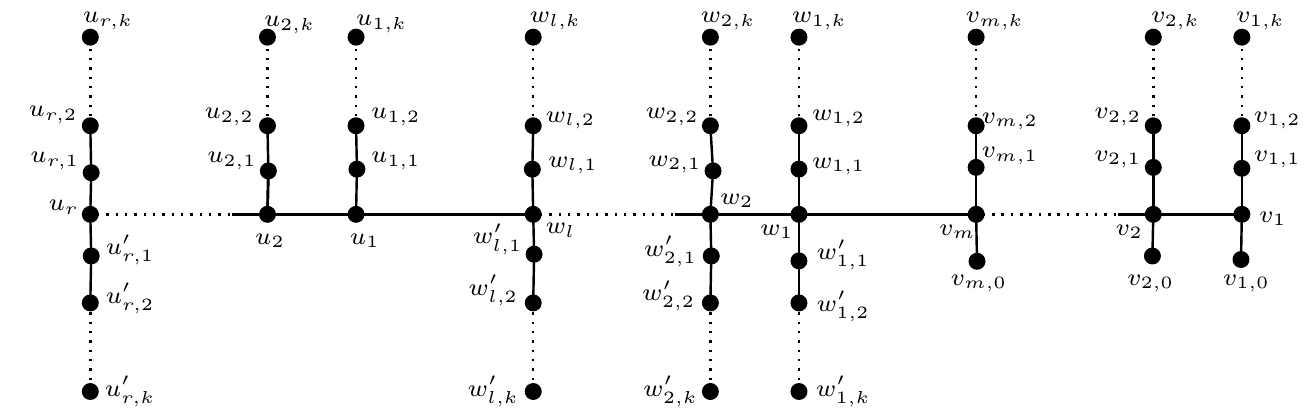}
			\caption{Tree $T^5_{k,m,l,r}$}\label{fig6}
		\end{figure}		
	\end{itemize} 
\end{proof}
\section{Maximum order graphs}\label{sec4}
The maximum order $n$ of a graph $G$  having diameter $d$ and metric dimension $dim(G)=\beta$, was shown to be $\beta + d^\beta$ \cite{char,khu}. This was proved by considering the maximum possible number of distinct metric representations with respect to a minimum resolving set. But this maximum order is only achieved when $d\leq 3$ or $\beta=1$. Later, Hernando et al. \cite{her1} proved a stronger result by showing that $n\leq (\lfloor \frac{2d}{3} \rfloor +1)^\beta+\sum_{i=1}^{\lceil \frac{d}{3} \rceil} (2i+1)^{\beta-1}$, where the maximum order is achieved for any arbitrary positive integers $d$ and $\beta$.  

Cáceres et al. \cite{cacer}  showed that for a graph $G$ of order $n$  having  $\gamma^r(G)=\gamma^r$, then $n\leq \gamma^r+\gamma^r\cdot 3^{\gamma^r-1}$. They also provided graphs achieving this maximum order. Next, we generalize this result for $\gamma^r_k(G)$ for all $k\geq 1$.
\begin{theorem}\label{th4.2}
	For $k\geq 1$, the maximum order of a connected graph $G$ having distance $k$-resolving domination number $\gamma^r_k$ is $\gamma^r_k+\gamma^r_k\sum_{p=1}^{k} (2p+1)^{\gamma^r_k-1}$. This maximum order is achieved for any $k,\gamma^r_k\geq 1$.
\end{theorem}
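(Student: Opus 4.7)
The plan is to prove the upper bound via a counting argument on metric representations with respect to a minimum distance $k$-resolving dominating set, and then to describe a family of graphs achieving equality.

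For the upper bound, let $S=\{w_1,\ldots,w_{\gamma^r_k}\}$ be a minimum distance $k$-resolving dominating set of $G$, and partition $V\setminus S$ into classes $C_1,\ldots,C_{\gamma^r_k}$ by placing $v\in C_i$ for the smallest index $i$ with $d_G(v,w_i)\leq k$; such an index exists since $S$ is distance $k$-dominating. For $v\in C_i$ with $d_G(v,w_i)=p\in\{1,\ldots,k\}$, the triangle inequality gives $|d_G(v,w_j)-d_G(w_i,w_j)|\leq p$ for every $j\neq i$, so each of the remaining $\gamma^r_k-1$ coordinates of $c(v\rvert S)$ takes at most $2p+1$ distinct values. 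Since $S$ resolves, the vertices in $C_i$ at distance $p$ from $w_i$ have pairwise distinct metric representations, so there are at most $(2p+1)^{\gamma^r_k-1}$ of them. Summing over $p\in\{1,\ldots,k\}$ and over the $\gamma^r_k$ classes yields $|V\setminus S|\leq \gamma^r_k\sum_{p=1}^{k}(2p+1)^{\gamma^r_k-1}$, which gives the announced bound on $n$.

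For attainability, set $\beta=\gamma^r_k$ and build a graph $G_{k,\beta}$ on $\beta+\beta\sum_{p=1}^{k}(2p+1)^{\beta-1}$ vertices as follows. Fix $\beta$ core vertices $w_1,\ldots,w_\beta$ at pairwise distance exactly $2k+1$, for instance by joining each pair by an internally disjoint path of length $2k+1$. For every triple $(i,p,\vec a)$ with $i\in\{1,\ldots,\beta\}$, $p\in\{1,\ldots,k\}$, and $\vec a=(a_j)_{j\neq i}\in\{-p,\ldots,p\}^{\beta-1}$, introduce a new vertex $v(i,p,\vec a)$ attached by pendant-like internally disjoint paths realizing $d(v(i,p,\vec a),w_i)=p$ and $d(v(i,p,\vec a),w_j)=(2k+1)+a_j$ for $j\neq i$. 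There are exactly $\beta\sum_{p=1}^{k}(2p+1)^{\beta-1}$ such triples. Then $S=\{w_1,\ldots,w_\beta\}$ is distance $k$-dominating (each new vertex lies within distance $p\leq k$ of its $w_i$) and resolving (distinct triples produce distinct metric vectors, the core-distance $2k+1$ precluding collisions across classes), so $\gamma^r_k(G_{k,\beta})\leq\beta$. The reverse inequality follows from the upper bound, because the function $f(\alpha)=\alpha+\alpha\sum_{p=1}^{k}(2p+1)^{\alpha-1}$ is strictly increasing in $\alpha$ and $|V(G_{k,\beta})|=f(\beta)$.

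The main obstacle is the verification of the construction: one must ensure that the prescribed distances from each $v(i,p,\vec a)$ to each $w_j$ are realized as actual shortest-path distances, with no accidental shortcut through another attached vertex or through an unintended part of the core. I would address this by keeping every new vertex on its own pendant branch hanging off the core, so that the only shortest paths from $v(i,p,\vec a)$ to the $w_j$'s follow the designated routes, and then check routinely that the intended distances $p$ and $(2k+1)+a_j$ are exactly the values produced.
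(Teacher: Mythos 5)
Your upper bound argument is correct and is essentially the paper's: count, for each $w_i\in S$ and each $p\in\{1,\ldots,k\}$, the at most $(2p+1)^{\gamma^r_k-1}$ metric representations available to a vertex at distance $p$ from $w_i$, and sum. (Your explicit partition into classes $C_i$ is in fact slightly cleaner than the paper's phrasing.) The problem is the attainability construction, and the gap is not merely the ``routine verification'' you defer at the end --- the construction as described cannot work. First, your vertex count omits all the internal vertices of the core paths of length $2k+1$ joining the $w_i$'s and of the attachment paths realizing the distances $p$ and $(2k+1)+a_j$. Once these are included, $|V(G_{k,\beta})|$ strictly exceeds $\beta+\beta\sum_{p=1}^{k}(2p+1)^{\beta-1}$, and then your own upper bound shows that $S=\{w_1,\ldots,w_\beta\}$ cannot be a distance $k$-resolving dominating set: some two vertices must share a representation. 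Concretely, the internal vertex of the $w_iw_j$ core path at distance $p\le k$ from $w_i$ has representation with $i$-th coordinate $p$, $j$-th coordinate $2k+1-p$ and the remaining coordinates $2k+1+p$, which coincides with the representation of $v(i,p,\vec a)$ for $a_j=-p$ and $a_l=p$ ($l\neq i,j$). Second, your proposed fix --- hanging each new vertex on its own pendant branch attached to the core at a single vertex $u$ --- makes $d(v,w_j)=d(v,u)+d(u,w_j)$ for every $j$, so the distance vector of $v$ is a constant shift of that of $u$; for fixed $(i,p)$ this realizes only a thin slice of the $(2p+1)^{\beta-1}$ required patterns $\vec a$ (e.g.\ $(p,\ldots,p)$ and $(-p,p,\ldots,p)$ cannot both arise this way). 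Extremal graphs here must use \emph{every} admissible representation exactly once, so there is no room for auxiliary path vertices at all.

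The paper circumvents this by defining the extremal graph directly on the set of admissible coordinate vectors: the vertices are the $r$ tuples of $Q_0$ (permutations of $(0,2k+1,\ldots,2k+1)$) together with all tuples $q$ having one coordinate $q_i=p\in\{1,\ldots,k\}$ and the others in $[2k+1-p,\,2k+1+p]$, with $q\sim q'$ iff all coordinates differ by at most $1$; it then proves that the graph distance from $q$ to the $i$-th vertex of $Q_0$ equals $q_i$. Every vertex is itself one of the counted representation classes, so the order is exactly $\gamma^r_k+\gamma^r_k\sum_{p=1}^{k}(2p+1)^{\gamma^r_k-1}$, and minimality of $Q_0$ then follows from the upper bound by the same monotonicity argument you invoke. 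To repair your proof you would need to replace your path-based gadget with such a shortcut-rich construction (or an equivalent one); the final monotonicity step is fine once the order really equals $f(\beta)$.
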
 
\begin{proof}
	Let $G$ be a graph of order $n$ and let $S$ be a minimum distance $k$-resolving dominating set of $G$. For any vertex $x\in V\setminus S$, let us consider $v_i$ a vertex in $S$, such that $d_G(x,v_i)=p\leq k$. If $\gamma^r_k(G)=\gamma^r_k\geq 2$, for any vertex $v_j$ from $S$ different from $v_i$, the triangle inequality gives $\lvert d_G(x,v_j)-d_G(v_i,v_j)\rvert\leq d_G(x,v_i)=p$. It follows that the metric representation of $x$ with respect to $S$ has the coordinate corresponding to $v_i$ equal to $p$ and for the other coordinates there are at most $2p+1$ possible values in each of the other $\gamma^r_k-1$ coordinates. Therefore, there are at most $(2p+1)^{\gamma^r_k-1}$ possible metric representations of $x$ with respect to the set $S$. Since $1\leq p \leq k$, there are at most $\sum_{p=1}^{k} (2p+1)^{\gamma^r_k-1}$ distinct metric representations for the vertices at distance less or equal to $k$ from $v_i$. Since  $\lvert S\rvert=\gamma^r_k$, then $n \leq \gamma^r_k+\gamma^r_k\sum_{p=1}^{k} (2p+1)^{\gamma^r_k-1}$. \par Let $k$ and $\gamma^r_k$ be two arbitrary positive integers, we will prove that there exists a graph having distance $k$-resolving domination number $\gamma^r_k$ and order $\gamma^r_k+\gamma^r_k\sum_{p=1}^{k} (2p+1)^{\gamma^r_k-1}$.
	\par If $\gamma^r_k=1$, then from Theorem~\ref{th3.1} the graph $G$ is a path graph of maximum order $k+1$, which coincides with the maximum order bound. If $\gamma^r_k=r\geq 2$, we consider the following subsets of $\mathbb{Z}^r$, $$ Q_0={\scriptstyle\{(0,2k+1,2k+1,\ldots,2k+1),(2k+1,0,2k+1,\ldots,2k+1),\ldots,(2k+1,2k+1,\ldots,2k+1,0)\}}.$$ For all $1\leq i \leq r$,	
	$$Q_i=\{(q_1,q_2,\ldots,q_r): 1\leq q_i\leq k \text{, and for $j\neq i$, } 2k-q_i+1\leq q_j\leq 2k+q_i+1 \}.$$  
	Let $G_r$ be the graph whose vertex set is $V(G_r)=\cup_{i=0}^r Q_i$. For which two vertices $q=(q_1,q_2,\ldots,q_r)$ and $q'=(q'_1,q'_2,\ldots,q'_r)$ are adjacent if and only if $\lvert q_j-q'_j\rvert\leq 1$, for each $1\leq j \leq r$.
	\begin{claim}
		The graph $G_r$ is a connected graph.
	\end{claim}
	\begin{proof}
		If $q_{i,0},q_{j,0}\in Q_0$,  where $q_{i,0}$ has the $i$-th element equal to $0$ and $q_{j,0}$ has the $j$-th element equal to $0$, we construct a $(q_{i,0},q_{j,0})$-path as following $$ {\scriptstyle(2k+1,\dots,\underset{i}{0},2k+1,\dots,\underset{j}{2k+1},\dots,2k+1)(2k+1,\dots,\underset{i}{1},2k+1,\dots,\underset{j}{2k},2k+1,\dots,2k+1)}$$ $${\scriptstyle(2k+1,\ldots,\underset{i}{2},2k+1,\dots,\underset{j}{2k-1},2k+1,\dots,2k+1)\dots\dots}  $$ $$ {\scriptstyle(2k+1,\dots,\underset{i}{k},2k+1,\dots,\underset{j}{k+1},2k+1,\dots,2k+1)(2k+1,\dots,\underset{i}{k+1},2k+1,\dots,\underset{j}{k},2k+1,\dots,2k+1)}$$ $${\scriptstyle(2k+1,\dots,\underset{i}{k+2},2k+1,\dots,\underset{j}{k-1},2k+1,\dots,2k+1)\dots\dots}$$ $${\scriptstyle(2k+1,\dots,\underset{i}{2k},2k+1\dots,\underset{j}{1},2k+1\dots,2k+1)(2k+1,\dots,\underset{i}{2k+1},2k+1,\dots,\underset{j}{0},2k+1,\dots,2k+1)}$$ 
		\par Also, for each $1\leq i \leq r$, if $q=(q_1,q_2,\dots,q_r)\in Q_i$. It is easy to see from the definition of the adjacency in $G_r$, that there is a $(q,q_{i,0})$-path. Hence, the graph $G_r$ is a connected graph.
	\end{proof}
	For $1\leq i \leq r$, and $q\in V(G_r)\setminus Q_0$, we denote $L_i(q)=(f_i(q_1),f_i(q_2),\ldots,f_i(q_r))$, where $f_i$ is an integer-valued function defined as following.
	\par If $q=(q_1,q_2,\ldots,q_r)\in Q_s$, with $s\neq i$.
	\begin{itemize}
		\item For $j\notin \{s,i\}$, $f_i(q_j)=\begin{cases}
		q_j  & if\,\,  q_j=2k+1,\\
		q_j-1 & if\,\,  q_j>2k+1, \\
		q_j+1 & if\,\,  q_j<2k+1.\\
		\end{cases}  $
		\item $f_i(q_s)=\begin{cases}
		q_s & if\,\,  q_s=k,\\
		q_s+1 & if\,\, q_s<k \text{ or } q_i=k+1.\\
		\end{cases}$
		\item $f_i(q_i)=q_i-1$.
	\end{itemize}
	\par If $q=(q_1,q_2,\ldots,q_r)\in Q_i$.
	\begin{itemize}
		\item For $j\neq i$, $f_i(q_j)=\begin{cases}
		q_j & if\,\,  q_j=2k+1,\\
		q_j-1 & if\,\,  q_j>2k+1, \\
		q_j+1 & if\,\,  q_j<2k+1.\\
		\end{cases}$
		\item $f_i(q_i)=q_i-1$.
	\end{itemize}
	\par For $t\geq 1$, we define $L^t_i(q)$ with $L^1_i(q)=L_i(q)$. For $t\geq 2$, $L^t_i(q)=L_i(L^{t-1}_i(q))=(f^t_i(q_1),f^t_i(q_2),\ldots,f^t_i(q_r))$, where $f^t_i$ is the $t$-th iterated function of $f_i$, i.e., $f^t_i=\underbrace{f_i\circ f_i\circ\ldots \circ f_i}_{\text{t times}}$.
	
	\begin{claim}\label{claim2}
		For all $1\leq i \leq r$, for any vertex $q=(q_1,q_2,\ldots,q_r)\in V(G_r)\setminus Q_0$, we have $L_i(q)\in V(G_r)$. Also, $L_i(q)$ is adjacent in $G_r$ to $q$, and  $L^{q_i}_i(q)=q_{0,i}$.
	\end{claim}  
	\begin{proof}
		\par Let $q=(q_1,q_2,\ldots,q_r)\in V(G_r)\setminus Q_0$. For $1\leq i \leq r$, we have\\ $L_i(q)=(f_i(q_1),f_i(q_2),\ldots,f_i(q_r))$. If $q\in Q_s$, where $s\neq i$, for $j\neq s$, we have $2k-q_s+1\leq q_j\leq 2k+q_s+1$, and $1\leq q_s\leq k$. We discuss the membership of $L_i(q)$ according to the following cases.
		\begin{itemize}
			\item[$(i)$] If $q_s< k$, we have $f_i(q_s)=q_s+1\leq k$, $f_i(q_i)=q_i-1\geq 2k-q_s$,  and for $j\notin\{i,s\}$,  $2k-q_s+2\leq f_i(q_j)\leq 2k+q_s$.  Then $L_i(q)=(f_i(q_1),f_i(q_2),\ldots,f_i(q_r)) \in Q_s$.
			\item[$(ii)$] If $q_s=k$, and $q_i> k+1$. Then $f_i(q_i)=q_i-1\geq k+1$, $f_i(q_s)=k$, and for $j\notin\{i,s\}$, $k+1\leq f_i(q_j)\leq 3k+1$. So $L_i(q)=(f_i(q_1),f_i(q_2),\ldots,f_i(q_r)) \in Q_s$.
			\item[$(iii)$] If $q_i=k+1$, then $q_s=k$. It follows that $f_i(q_i)=k$, $f_i(q_s)=k+1$, and for $j\notin\{i,s\}$, $k+1\leq f_i(q_j)\leq 3k+1$. Therefore, $L_i(q) \in Q_i$.
		\end{itemize} 
		\par Now, if $q\in Q_i$, from the definition of $f$ it is easy to see that $L_i(q)\in Q_i$. Hence, for any vertex $q\in V(G_r)\setminus Q_0$, we have $L_i(q)\in V(G_r)$. Moreover, for $q\in V(G_r)\setminus Q_0$, and all $1\leq i,j \leq r$, we have $\lvert f_i(q_j)-q_j\rvert\leq 1$, $f^{q_i}_i(q_i)=0$, and for $j\neq i$, $f^{q_i}_i(q_j)=2k+1$. Thus, $L_i(q)q\in E(G_r)$, and $L^{q_i}_i(q)=q_{0,i}$.  
	\end{proof}
	\begin{claim}
		For all $1\leq i \leq r$, for any vertex $q=(q_1,q_2,\ldots,q_r)\in V(G_r)\setminus Q_0$, $d_{G^r}(q,q_{0,i})=q_i$.
	\end{claim}
	\begin{proof}
		Based on Claim~\ref{claim2} for $1\leq i \leq r$, we have  $qL_i(q)L^2_i(q)\ldots L^{q_i}_i(q)=q_{0,i}$ is a $(q,q_{0,i})$-path in $G_r$ of  length $q_i$. Hence $d_{G^r}(q,q_{0,i})\leq q_i$. Since $q_{0,i}$ and $q$ are vertices having respectively $0$ and $q_i$ at the $i$-th coordinate and any two vertices in $G_r$ can be adjacent only if the difference between the respective coordinates is at most $1$. It follows that $d_{G^r}(q,q_{0,i})\geq q_i$. Therefore, $d_{G^r}(q,q_{0,i})=q_i$.
	\end{proof}  
	From above we can conclude that for any two different vertices $q$ and $q'$ in $V(G_r)\setminus Q_0$, there exists $1\leq i \leq r$, such that $d_{G^r}(q,q_{0,i})=q_i\neq d_{G^r}(q',q_{0,i})=q'_i$. It follows that the set of vertices $Q_0$ is a resolving set of $G_r$. Also, for all $1\leq i \leq r$, and any vertex $q\in Q_i$,  $d_{G^r}(q,q_{0,i})=q_i\leq k$. Hence, the set $Q_0$ is as well a distance $k$-dominating set of $G_r$. Hence, $\gamma^r_k(G_r)\leq \lvert Q_0\rvert=r$.
	\par Suppose that $\gamma^r_k(G_r)\leq r-1$. We have the order of the graph $G_r$ is $\lvert G_r\rvert=r+r\sum_{p=1}^{k} (2p+1)^{r-1}$. Also the maximum order of a graph having $\gamma^r_k(G_r)\leq r-1$, was previously proved to be less or equal to $\gamma^r_k(G_r)+\gamma^r_k(G_r)\sum_{p=1}^{k} (2p+1)^{\gamma^r_k(G_r)-1}\leq (r-1)+(r-1)\sum_{p=1}^{k} (2p+1)^{r-2}$, it is a contradiction. Therefore, $\gamma^r_k(G_r)= r$. 
\end{proof}
For $k=1$, the maximum order in Theorem~\ref{th4.2} is precisely the maximum order given in \cite{cacer}.

\section{Nordhaus-Gaddum type bounds}\label{sec5}
Nordhaus-Gaddum bounds are sharp bounds on the sum or the product of a parameter of a graph $G$ and its complement $\overline{G}$. The survey \cite{aou} contains a bibliography of these types of bounds for some graph parameters. Hernando et al. \cite{her} found Nordhaus-Gaddum type of bounds for the metric dimension and the resolving domination number.  We provide those bounds for the distance $k$-resolving domination number for $k\geq 2$.
\begin{theorem}\label{th6.1}
	For any graph $G$ of order $n\geq 2$,
	\begin{itemize}
		\item if $k=2$, then
		$$3 \leq \gamma^r_2(G)+\gamma^r_2(\overline{G})\leq 2n-1 \text{ and } 2 \leq \gamma^r_2(G)\cdot\gamma^r_2(\overline{G})\leq n(n-1).$$ 
		The lower bounds are attained  if and only if $G\in \{K_2,\overline{K}_2,P_3,\overline{P}_3\}$.\\
		The upper bounds are  attained if and only if $G\in \{K_n,\overline{K}_n\}$.
		\item If $k\geq 3$, then $$ 2 \leq \gamma^r_k(G)+\gamma^r_k(\overline{G})\leq 2n-1 \text{ and } 1 \leq \gamma^r_k(G)\cdot \gamma^r_k(\overline{G})\leq n(n-1).$$ 
		The lower bounds are attained if and only if $G\cong P_4$.\\
		The upper bounds are attained if and only if $G\in \{K_n,\overline{K}_n\}$.
	\end{itemize}
\end{theorem}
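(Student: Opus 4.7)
The plan is to handle the upper bounds uniformly for all $k\geq 2$ and then treat the two lower-bound regimes ($k=2$ and $k\geq 3$) separately. The argument rests on two ingredients already established: Theorem~\ref{th3.1}(a), which identifies the graphs with $\gamma^r_k=1$ as exactly $\{P_i\}_{i=2}^{k+1}$, together with Theorem~\ref{th3.1}(c) and Proposition~\ref{prop2.1}, which give $\gamma^r_k(G)=n-1$ iff $G\cong K_n$ and $\gamma^r_k(G)\leq n-1$ for every connected $G$ of order at least $2$.

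For the upper bounds, the first step is to show that $\gamma^r_k(G)\leq n-1$ unless $G\cong\overline{K}_n$. Proposition~\ref{prop2.1} covers the connected case; if $G$ is disconnected but has some component $H$ of order $\geq 2$, then gluing a minimum distance-$k$-resolving dominating set of $H$ (of size $\leq |H|-1$) to the vertex sets of the remaining components still gives a distance-$k$-resolving dominating set of $G$, since vertices in distinct components have infinite pairwise distance. Hence $\gamma^r_k(G)=n$ occurs only for $G\cong\overline{K}_n$, and because $G\cong\overline{K}_n$ and $\overline{G}\cong\overline{K}_n$ cannot both hold for $n\geq 2$, at most one of $\gamma^r_k(G),\gamma^r_k(\overline{G})$ equals $n$. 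This immediately yields $\gamma^r_k(G)+\gamma^r_k(\overline{G})\leq n+(n-1)=2n-1$ and $\gamma^r_k(G)\cdot\gamma^r_k(\overline{G})\leq n(n-1)$. Equality in either bound forces one parameter to equal $n$, hence $G\in\{K_n,\overline{K}_n\}$, and Theorem~\ref{th3.1}(c) then forces the complementary parameter to equal $n-1$, so both extremes are attained precisely on this pair.

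For the $k=2$ lower bound, Theorem~\ref{th3.1}(a) reduces $\gamma^r_2(G)=1$ to $G\in\{P_2,P_3\}$. Since $\overline{P_2}=\overline{K}_2$ and $\overline{P_3}=K_1\cup K_2$, neither complement lies in $\{P_2,P_3\}$, so $\gamma^r_2(G)$ and $\gamma^r_2(\overline{G})$ cannot both equal $1$, giving sum $\geq 3$ and product $\geq 2$. Equality forces the multiset of parameters to be $\{1,2\}$, which occurs precisely when $G$ or $\overline{G}$ lies in $\{P_2,P_3\}$, i.e.\ $G\in\{K_2,\overline{K}_2,P_3,\overline{P}_3\}$; a direct check confirms the value $2$ on each companion graph and closes this case.

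For $k\geq 3$ the same approach applies, but the list $\{P_2,\ldots,P_{k+1}\}$ is longer, and the main obstacle is showing that $P_4$ is the only path in the list whose complement is again in the list. For $n\in\{2,3\}$ the complement is disconnected; $P_4$ is self-complementary; and for $n\geq 5$ the vertices $1,3,n$ of $P_n$ are pairwise non-adjacent, so they form a triangle in $\overline{P_n}$, which is therefore not a tree, let alone a path. Hence $\gamma^r_k(G)=\gamma^r_k(\overline{G})=1$ forces $G\cong P_4$, giving sum $\geq 2$ and product $\geq 1$ with equality iff $G\cong P_4$, as claimed.
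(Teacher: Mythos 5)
Your proof is correct and follows essentially the same route as the paper: both arguments reduce the extremal cases to the characterizations in Theorem~\ref{th3.1}(a) and (c), observing that $G$ and $\overline{G}$ cannot simultaneously be extremal except when $G\cong P_4$ (for the lower bounds, $k\geq 3$) or $G\in\{K_n,\overline{K}_n\}$ (for the upper bounds). You in fact supply two details the paper leaves implicit --- the verification that $\gamma^r_k(G)=n$ only for $\overline{K}_n$ via the disconnected-component construction, and the triangle argument showing $\overline{P_n}$ is not a path for $n\geq 5$ --- which only strengthens the write-up.
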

\begin{proof}
	If $k=2$, we have from Theorem~\ref{th3.1} $(a)$, $\gamma^r_2(G)=1$ if and only if $G$ is $K_2$ or $P_3$. Also, for any other graph $G$, we have $\gamma^r_2(G)\geq 2$. This means that $\gamma^r_2(G)+\gamma^r_2(\overline{G})\geq 3$ and $ \gamma^r_2(G)\cdot\gamma^r_2(\overline{G})\geq 2$. Since $\gamma^r_2(\overline{K}_2)=2$, and $\gamma^r_2(\overline{P}_3)=2$, we can conclude that these lower bounds are attained if and only if $G\in \{K_2,\overline{K}_2,P_3,\overline{P}_3\}$. \par 
	If $k\geq 3$, based on Theorem~\ref{th3.1} $(a)$,  we have $\gamma^r_k(G)=1$ if and only if $G\in\{P_2,P_3,\ldots, P_{k+1}\}$. The graph $P_4$ is a self-complementary graph, i.e., $\overline{P}_4\cong P_4$, we have $\gamma^r_k(\overline{P}_4)=\gamma^r_k(P_4)=1$. Also, $P_4$ is the only graph whose complement is also a path and has a distance $k$-resolving domination number equal to $1$. Therefore  $\gamma^r_k(G)+\gamma^r_k(\overline{G})\geq 2$ and $ \gamma^r_k(G)\cdot \gamma^r_k(\overline{G})\geq 1$, also these lower bounds are achieved if and only if $G$ is $P_4$.
	\par Otherwise, for $k\geq 2$, we have  $\gamma^r_k(G)=n$ if and only if $G$ is the empty graph on $n$ vertices $\overline{K}_n$, whose complement graph is the complete graph $K_n$. According to Theorem~\ref{th3.1} $(c)$, we have  $\gamma^r_k(K_n)=n-1$. Therefore, for any graph $G$ of order $n\geq 2$, for $k\geq 2$, we have $\gamma^r_k(G)+\gamma^r_k(\overline{G})\leq 2n-1$ and $\gamma^r_k(G)\cdot \gamma^r_k(\overline{G})\leq n(n-1)$. Moreover, these upper bounds are achieved if and only if $G\in \{K_n,\overline{K}_n\}$. 
\end{proof}
Let $G$ be a connected graph with $V(G)=\{1,2,\ldots, n\}$. The graph $G[H^i]$ is the graph obtained from $G$ by replacing the vertex $i$ with a graph $H$ and joining each vertex of $H$ to every vertex adjacent to $i$ in $G$. Let $H_1$ and $H_2$ be two graphs, the graph $G[H_1^i,H_2^j]$ is the graph obtained from $G$ by replacing the vertex $i$ (respectively, $j$) with the graph $H_1$ (respectively, $H_2$) and joining each vertex of $H_1$ (respectively, $H_2$) to every vertex adjacent to $i$ (respectively, $j$) in $G$. If $i$ and $j$ are adjacent in $G$, join every vertex of $H_1$ to every vertex of $H_2$. The Bull graph $B$ is the graph with vertex set $V(B)=\{1,2,3,4,5\}$, and edge set  $V(B)=\{12,13,23,14,25\}$. The graph $B$ is a self-complementary graph, i.e., $\overline{B}\cong B$.

\begin{theorem}
	If $G$ and $\overline{G}$ are both connected graphs of order $n\geq 4$,
	\begin{itemize}
		\item if $k=2$, then $$4 \leq \gamma^r_2(G)+\gamma^r_2(\overline{G})\leq 2n-4 \text{ and } 4 \leq \gamma^r_2(G)\cdot\gamma^r_2(\overline{G})\leq (n-2)^2.$$ 
		The upper bounds are attained if and only if $G\cong P_4$.
		\item If $k\geq 3$, then $$2 \leq \gamma^r_k(G)+\gamma^r_k(\overline{G})\leq 2n-6 \text{ and } 1 \leq \gamma^r_k(G)\cdot \gamma^r_k(\overline{G})\leq (n-3)^2.$$ 
		The lower bounds are  attained if and only if $G\cong P_4$.\\
		The upper bounds are  attained if and only if $G\in \{P_4,C_5,B\}\cup\\ \{P_4[K_{n-3}^1],P_4[\overline{K}_{n-3}^1], P_4[K_{n-3}^2],P_4[\overline{K}_{n-3}^2] \}\cup\{P_4[K_r^1,K_{n-r-2}^2] : 1\leq r\leq n-3\}\cup \{P_4[\overline{K}^1_r,\overline{K}^3_{n-r-2}]: 1\leq r\leq n-3 \} $.
	\end{itemize}
\end{theorem}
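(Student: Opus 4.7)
The plan is to reduce each of the four bounds (sum and product, each with lower and upper sides) to structural facts already established in the excerpt, namely Theorem~\ref{th3.1}, Lemma~\ref{lem2.2}, and Proposition~\ref{prop3.1}, and then carry out a case analysis for the equality characterizations. The lower bounds and the non-equality part of the upper bounds are short; the bulk of the effort lies in the upper-bound equality when $k\ge 3$.

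For the \emph{lower bounds}, Theorem~\ref{th3.1}(a) states that $\gamma^r_k(G)=1$ if and only if $G\in\{P_2,\ldots,P_{k+1}\}$. If $k=2$ and $n\ge 4$, then neither $G$ nor $\overline{G}$ is $P_2$ or $P_3$, so both invariants are at least $2$, yielding sum $\ge 4$ and product $\ge 4$. If $k\ge 3$, the sum can be $2$ (equivalently, the product $1$) only if $G$ and $\overline{G}$ are both paths of order at most $k+1$. The only path whose complement is also a path is $P_4$ (self-complementary), which indeed satisfies $\gamma^r_k(P_4)=1$ for every $k\ge 3$, giving the claimed equality characterization.

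For the \emph{plain upper bounds}, connectedness of both $G$ and $\overline{G}$ excludes $G=K_n$ (whose complement is disconnected), so by Theorem~\ref{th3.1}(c) we have $\gamma^r_k(G)\le n-2$ for any $k\ge 2$, immediately giving sum $\le 2n-4$ and product $\le (n-2)^2$. For $k\ge 3$ I push this one step further: the graphs with $\gamma^r_k(G)=n-2$ listed in Theorem~\ref{th3.1}(b)---namely $K_{s,t}$, $K_s+\overline{K}_t$, and $K_s+(K_1\cup K_t)$---each have a complement of the form $\overline{H_1}\cup\overline{H_2}$ containing isolated vertices from the side that was originally complete, and so are disconnected. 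Hence $\gamma^r_k(G),\gamma^r_k(\overline{G})\le n-3$ under our hypotheses, producing sum $\le 2n-6$ and product $\le (n-3)^2$.

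For the \emph{equality characterizations}, the $k=2$ case is immediate: among the graphs in Theorem~\ref{th3.1}(b), only $P_4$ has a connected complement (in fact $\overline{P_4}\cong P_4$), and $\gamma^r_2(P_4)=2=n-2$. For $k\ge 3$, equality in both the sum and product bounds is equivalent to $\gamma^r_k(G)=\gamma^r_k(\overline{G})=n-3$. By Lemma~\ref{lem2.2} applied with $i=3$, on non-path graphs of order $n\ge 5$ this reduces to $dim(G)=dim(\overline{G})=n-3$; paths of order $n\ge 4$ are handled via Proposition~\ref{prop3.1} and yield only $P_4$ (when $n=4$). The remaining step, and the one I expect to be the main obstacle, is to combine the existing classification of graphs with $dim(G)=n-3$ (in terms of twin partitions of the vertex set) with the condition $dim(\overline{G})=n-3$ and check that the resulting family is precisely $\{P_4,C_5,B\}\cup\{P_4[K_{n-3}^1],P_4[\overline{K}_{n-3}^1],P_4[K_{n-3}^2],P_4[\overline{K}_{n-3}^2]\}\cup\{P_4[K_r^1,K_{n-r-2}^2]:1\le r\le n-3\}\cup\{P_4[\overline{K}_r^1,\overline{K}_{n-r-2}^3]:1\le r\le n-3\}$. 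This requires a careful analysis of how the substitution operations $P_4[H^i]$ and $P_4[H_1^i,H_2^j]$ interact with complementation (in particular how complementation swaps $K_r$ with $\overline{K}_r$ and interchanges the positions $1,4$ with $2,3$ inside $P_4$), together with a verification that each listed graph actually realizes $\gamma^r_k=n-3$ simultaneously in $G$ and in $\overline{G}$.
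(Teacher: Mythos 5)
Your reduction is the same as the paper's: lower bounds and the plain upper bounds from Theorem~\ref{th3.1}(a)--(c) plus the observation that the extremal graphs for $n-1$ and $n-2$ all have disconnected complements, the $k=2$ equality case from Theorem~\ref{th3.1}(b), and the $k\geq 3$ equality case reduced via Lemma~\ref{lem2.2} to $dim(G)=dim(\overline{G})=n-3$. The difference is the last step, and there your proposal stops short of a proof. The paper does not re-derive the family of graphs with $dim(G)=dim(\overline{G})=n-3$ from a twin-partition classification of $dim(G)=n-3$; it simply invokes the Nordhaus--Gaddum theorem of Hernando, Mora and Pelayo \cite{her}, which states that for $G$ and $\overline{G}$ both connected, $dim(G)+dim(\overline{G})=2n-6$ holds exactly for the listed family $\{P_4,C_5,B\}\cup\{P_4[K_{n-3}^1],\ldots\}$. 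What you describe as ``the main obstacle'' is therefore a known published result that should be cited, not reproved; as written, your proposal leaves the only genuinely hard part of the argument as an unexecuted plan, and carrying it out from scratch would dwarf the rest of the proof.

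A second, subtler point: Lemma~\ref{lem2.2} only converts $\gamma^r_k(G)=n-3$ into $dim(G)=n-3$ for graphs that are \emph{not} paths, so the path case must be settled separately, and your dismissal ``paths yield only $P_4$'' is not justified by Proposition~\ref{prop3.1}. Indeed, for $k=3$ and $n=5$ that proposition gives $\gamma^r_3(P_5)=2=n-3$ even though $dim(P_5)=1$, and $\overline{P_5}$ (the house graph, connected, of diameter $2$) satisfies $\gamma^r_3(\overline{P_5})=dim(\overline{P_5})=2=n-3$ by Lemma~\ref{lem2.1}; so $P_5$ attains the $k=3$ upper bounds without appearing in the stated family. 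The paper's own proof passes over this same point when it asserts that for $n\geq 5$ the equality $\gamma^r_k(G)=\gamma^r_k(\overline{G})=n-3$ is equivalent to $dim(G)=dim(\overline{G})=n-3$, so this is a shared blind spot rather than something you alone introduced; but if you intend your write-up to be complete, the path case needs an explicit check of when $\gamma^r_k(P_n)=n-3$ with $\overline{P_n}$ connected, rather than the one-line claim you give.
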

\begin{proof}
	For $k=2$, let $G$ be a graph such that $G$ and $\overline{G}$ are connected graphs. From Theorem~\ref{th3.1} $(a)$,  $\gamma^r_2(G)=1$ if and only if $G$ is either $K_2$ or $P_3$.  Then both $G$ and $\overline{G}$ have distance $2$-resolving domination number greater or equal to $2$. Hence, $\gamma^r_2(G)+\gamma^r_2(\overline{G})\geq 4$ and $\gamma^r_2(G)\cdot\gamma^r_2(\overline{G})\geq 4$. Also based on Proposition~\ref{prop2.3} we have $\gamma^r_2(P_4)=\gamma^r_2(\overline{P}_4)=2$, then the lower bounds are sharp. \par 
	Otherwise, we have from Theorem~\ref{th3.1} $(c)$, $K_n$ is the only connected graph with distance $2$-resolving domination number equal to $n-1$. Since the complement of the complete graph is disconnected, it follows that $\gamma^r_2(G)\leq n-2$. Moreover, from Theorem~\ref{th3.1} $(b)$, for $n\geq 4$, $\gamma^r_2(G)=n-2$ if and only if $G$ is either $P_4$, $K_{s,t} (s,t\geq 1)$, $K_s+\overline{K}_{t} (s\geq 1, t\geq 2)$, or $K_s+(K_1\cup K_{t}) (s,t\geq 1)$. The only graph from these graphs whose complement graph is also connected is the path $P_4$. Since $P_4$ is self-complementary, we can conclude that  $\gamma^r_2(G)+\gamma^r_2(\overline{G})\leq 2n-4$ and $\gamma^r_2(G)\cdot\gamma^r_2(\overline{G})\leq (n-2)^2$, where the equality holds if and only if $G\cong P_4$. \par 
	For $k\geq 3$, we have $\gamma^r_k(P_4)=1$. The graph $P_4$ is self-complementary and is the only graph in Theorem~\ref{th3.1} $(a)$ whose complement is a path graph having $\gamma^r_k(\overline{G})=1$. Then $ \gamma^r_k(G)+\gamma^r_k(\overline{G})\geq 2$ and $ \gamma^r_k(G)\cdot \gamma^r_k(\overline{G})\geq 1$, and these lower bounds are achieved if and only if $G$ is $P_4$. \par
	Otherwise, we have from Theorem~\ref{th3.1} $(c)$, $\gamma^r_k(G)=n-1$ if and only if $G$ is a complete graph. It follows that $\gamma^r_k(G)\leq n-2$. Furthermore, in Theorem~\ref{th3.1} $(b)$,  $\gamma^r_k(G)=n-2$ if and only if $G$ is either $K_{s,t} (s,t\geq 1)$, $K_s+\overline{K}_{t} (s\geq 1, t\geq 2)$, or $K_s+(K_1\cup K_{t}) (s,t\geq 1)$. Since the complements of these graphs are all disconnected, it follows that $\gamma^r_k(G)\leq n-3$ and $\gamma^r_k(\overline{G})\leq n-3$.   Therefore, for $k\geq 3$,  $\gamma^r_k(G)+\gamma^r_k(\overline{G})\leq 2n-6$ and $\gamma^r_k(G)\cdot \gamma^r_k(\overline{G})\leq (n-3)^2$. The only connected graph of order $4$ whose complement graph is also a connected graph is $P_4$, we have $\gamma^r_2(P_4)=\gamma^r_2(\overline{P}_4)=1$. Also for $n\geq 5$, based on Lemma~\ref{lem2.2}, we have $\gamma^r_k(G)=\gamma^r_k(\overline{G})=n-3$ if and only if  $dim(G)=dim(\overline{G})=n-3$. It follows that $\gamma^r_k(G)+\gamma^r_k(\overline{G})=2n-6$ if and only if $dim(G)+dim(\overline{G})=2n-6$. In \cite{her}, if $G$ and $\overline{G}$ are both conncted graphs, we have $dim(G)+dim(\overline{G})=2n-6$ if and only if $G\in \{P_4,C_5,B\}\cup\{P_4[K_{n-3}^1],P_4[\overline{K}_{n-3}^1], P_4[K_{n-3}^2],P_4[\overline{K}_{n-3}^2] \}\cup\{P_4[K_r^1,K_{n-r-2}^2] : 1\leq r\leq n-3\}\cup \{P_4[\overline{K}^1_r,\overline{K}^3_{n-r-2}]: 1\leq r\leq n-3 \} $.
\end{proof}

\section{Concluding remarks} 
The study of the distance $k$-resolving domination number could be extended to other graph families and operations on graphs not discussed here. For example for trees, a formula in \cite{hening3} is provided to compute efficiently $\gamma^r(T)$ for any tree $T$. We ask if it would be possible also for $\gamma^r_k(T)$ when $k\geq 2$. \par 
For $k\geq 1$, we denote $N_k(v)=\{x\in V : 0 < d_G(v,x)\leq k \}$, the \emph{open $k$-neighborhood} of a vertex $v$ in $V$. The \emph{$k$-locating-dominating set} defined as a set $X\subseteq V$, verifying for every $v,u\in V \setminus X$, we have $\emptyset \neq N_k(v)\cap X\neq N_k(u)\cap X \neq~\emptyset$. The minimum cardinality of such set is called the \emph{$k$-locating-domination number} denoted by $LD_k(G)$. Results about the $k$-locating-domination number can be found surveyed in  \cite{lob}. Necessarily every $k$-locating-dominating set is a distance $k$-resolving dominating set, the opposite is not true. Therefore for all $k\geq 1$, we have  $\gamma^r_k(G)\leq LD_k(G)$.  For $k=1$, in \cite{cacer} it is shown that $LD_1(T)\leq 2\gamma^r(T)-2$ for any tree $T$ different from $P_6$. In \cite{gonz}, it is proved that  $LD_1(G)\leq (\gamma^r(G))^2$ for any graph $G$ not containing $C_4$ or $C_6$ as a subgraph. Finding an upper bound for $LD_1(G)$ in terms of $\gamma^r(G)$ for graphs in general is still open, it is shown \cite{gonz} that such an upper bound is at least exponential in terms of $\gamma^r(G)$. Is it possible to find upper bounds for $LD_k(G)$ in terms $\gamma^r_k(G)$ when $k\geq 2$ for graphs?

\section*{Acknowledgement}
We gratefully acknowledge Airlangga University, Indonesia, and Ibn Zohr University, Morocco, for their supervision and support in accomplishing this work.

\end{document}